\title{Complexe de poids des vari\'et\'es alg\'ebriques r\'eelles avec action}
\author{Fabien Priziac}
\date{}
\newtheorem{de}{D\'efinition}[section]
\newtheorem{theo}[de]{Th\'eor\`eme}
\newtheorem{prop}[de]{Proposition}
\newtheorem{lem}[de]{Lemme}
\newcommand{\Supp}{\text{Supp}}
\newcommand{\modulo}{\text{mod}}
\theoremstyle{remark}
\newtheorem{rem}[de]{Remarque}
\begin{document}

\maketitle

\begin{abstract}
En utilisant la fonctorialit\'e du complexe de poids de C. McCrory et A. Parusi\'nski -qui induit un analogue de la filtration par le poids pour les vari\'et\'es alg\'ebriques complexes sur l'homologie de Borel-Moore \`a coefficients dans $\mathbb{Z}_2$ des vari\'et\'es alg\'ebriques r\'eelles-, on d\'efinit un complexe de poids avec action sur les vari\'et\'es alg\'ebriques r\'eelles munies d'une action d'un groupe fini. Mettant l'accent sur le groupe \`a deux \'el\'ements, on \'etablit ensuite une version filtr\'ee de la suite courte de Smith pour une involution, tenant compte de la filtration Nash-constructible qui r\'ealise le complexe de poids avec action. Son exactitude est impliqu\'ee par le d\'ecoupage d'une vari\'et\'e Nash munie d'une involution alg\'ebrique le long d'un sous-ensemble sym\'etrique par arcs.
\end{abstract}
\footnote{Mots-cl\'es : filtration par le poids, vari\'et\'es alg\'ebriques r\'eelles, action de groupe, suite exacte de Smith, ensembles sym\'etriques par arcs, fonctions Nash-constructibles, vari\'et\'es Nash.
\\
{\it 2010 Mathematics Subject Classification :} 14P25, 14P10, 14P20, 57S17, 57S25 

Travail issu d'une th\`ese effectu\'ee au laboratoire IRMAR (Institut de Recherche Math\'ematique de Rennes), au sein de l'Universit\'e de Rennes 1, et partiellement financ\'e par le projet ANR-08-JCJC-0118-01.}

\section{Introduction}

Dans \cite{Del}, P. Deligne a \'etabli l'existence d'une filtration dite par le poids sur la cohomologie rationnelle des vari\'et\'es alg\'ebriques complexes. Un analogue de cette filtration sur l'homologie de Borel-Moore \`a coefficients dans $\mathbb{Z}_2$ des vari\'et\'es alg\'ebriques r\'eelles a \'et\'e introduit par B. Totaro dans \cite{Tot}. En utilisant un crit\`ere d'extension de foncteurs d\'efinis sur les vari\'et\'es lisses de F. Guill\'en et V. Navarro Aznar (\cite{GNA}), C. McCrory et A. Parusi\'nski ont montr\'e dans \cite{MCP} l'existence d'un complexe de cha\^ines filtr\'e nomm\'e de poids, d\'efini \`a quasi-isomorphisme filtr\'e pr\`es, fonctoriel sur la cat\'egorie des vari\'et\'es alg\'ebriques r\'eelles et induisant la filtration par le poids de Totaro. La suite spectrale de poids r\'eelle ne converge pas d\`es l'ordre deux (contrairement \`a son homologue complexe), et McCrory et Parusi\'nski mettent en \'evidence la richesse des informations qu'elle contient -par exemple les nombres de Betti virtuels (\cite{MCP-VB})-, tout en en enrichissant la compr\'ehension, en r\'ealisant par exemple le complexe de poids d\`es le niveau des cha\^ines par une filtration g\'eom\'etrique. Leur filtration Nash-constructible -d\'efinie en utilisant les fonctions Nash-constructibles- l'\'etend \`a la cat\'egorie des ensembles et morphismes $\mathcal{AS}$ (\cite{Kur}, \cite{KP}), \'etablissant la fonctorialit\'e des objets ``de poids'' par rapport aux applications continues avec graphe $\mathcal{AS}$.

Dans cet article, on consid\`ere des vari\'et\'es alg\'ebriques r\'eelles munies de l'action d'un groupe fini. On met alors \`a profit la fonctorialit\'e du complexe de poids pour le munir de l'action induite (th\'eor\`eme \ref{th_complex-poids-act}). D'une part, la finitude du groupe, impliquant l'existence d'une r\'esolution des singularit\'es \'equivariante, d'une compactification \'equivariante et d'un lemme de Chow-Hironaka \'equivariant (\cite{DL}, remarque \ref{val_crit_action}), nous permet de montrer, en utilisant une version avec action du crit\`ere d'extension de Guill\'en et Navarro Aznar (th\'eor\`eme \ref{crit_action}), l'unicit\'e (\`a quasi-isomorphisme filtr\'e \'equivariant pr\`es) du complexe de poids avec action. D'autre part, l'\'equivariance des op\'erations de base sur les cha\^ines semi-alg\'ebriques \`a supports ferm\'es nous permet de le r\'ealiser via la filtration g\'eom\'etrique/Nash-constructible, munie de l'action induite \'egalement par fonctorialit\'e (remarque \ref{rem_fil_geom_act}).

Dans la suite de l'article, on souhaite comprendre plus en avant les cha\^ines invariantes sous l'action, et notamment la compatibilit\'e de la suite exacte courte de Smith avec la filtration Nash-constructible. Pour cela, on se concentre sur le cas du groupe \`a deux \'el\'ements, pour lequel cette \'etude se r\'ev\`ele d\'ej\`a tr\`es int\'eressante. On montre ainsi que l'on peut avoir un certain contr\^ole sur la r\'egularit\'e du d\'ecoupage d'une cha\^ine invariante propos\'e par la suite exacte de Smith d'une involution. Pr\'ecis\'ement, on \'etablit que, pour tout entier $\alpha$, on peut \'ecrire une cha\^ine invariante de degr\'e $\alpha$ (vis-\`a-vis de la filtration Nash-constructible) comme la somme de sa restriction \`a l'ensemble des points fixes, d'une cha\^ine de degr\'e au plus $\alpha + 1$ et de l'image de celle-ci par l'involution (proposition \ref{decoup_fin}). 

Pour parvenir \`a ce r\'esultat, on r\'eduit la question au cas d'une vari\'et\'e Nash affine compacte connexe munie d'une involution alg\'ebrique (non triviale). On prouve qu'un tel objet peut \^etre d\'ecoup\'e le long d'un sous-ensemble sym\'etrique par arcs en deux morceaux semi-alg\'ebriques \'echang\'es sous l'action (th\'eor\`eme \ref{decoup_Nash}). 

Enfin, en guise d'interpr\'etation de cette suite exacte courte de Smith Nash-constructible, on montre que dans le cas d'une vari\'et\'e alg\'ebrique compacte munie d'une action libre, l'isomorphisme entre les cha\^ines invariantes et les cha\^ines du quotient, qui est alors sym\'etrique par arcs, est compatible avec la filtration Nash-constructible (proposition \ref{fil_Nash_quotient}).
\\

On d\'ebute cet article en munissant, dans la partie \ref{chaines semi-alg}, le complexe des cha\^ines semi-alg\'ebriques \`a supports ferm\'es d'une vari\'et\'e alg\'ebrique r\'eelle munie d'une action de groupe, de l'action induite. On remarque alors l'\'equivariance de la fonctorialit\'e des cha\^ines semi-alg\'ebriques, ainsi que celle des op\'erations de restriction, d'adh\'erence et de tir\'e en arri\`ere.

C'est dans la partie \ref{chap_comp_poids_act} que le complexe de poids des vari\'et\'es alg\'ebriques r\'eelles avec action d'un groupe fini est muni de l'action induite par fonctorialit\'e. Y est \'egalement \'etablie son unicit\'e \`a quasi-isomorphisme filtr\'e \'equivariant pr\`es, en utilisant l'analogue avec action du crit\`ere d'extension de Guill\'en et Navarro Aznar.

L'existence d'un d\'ecoupage d'une vari\'et\'e Nash munie d'une involution alg\'ebrique le long d'un sous-ensemble sym\'etrique par arcs est \'enonc\'ee et d\'emontr\'ee dans la partie \ref{section_decoup_var_nash}. On l'utilise ensuite dans la partie \ref{suite_exacte_smith_nash_cons} pour \'etablir notre suite exacte courte de Smith Nash-constructible, pour laquelle on y donne aussi notre interp\'etation dans le cas compact sans point fixe.
\\

{\bf Remerciements.} L'auteur souhaite remercier M. Coste, G. Fichou, F. Guill\'en, T. Limoges, C. McCrory et A. Parusi\'nski pour de fructueux discussions et commentaires.

\section{Cha\^ines semi-alg\'ebriques et action de groupe} \label{chaines semi-alg}

Consid\'erant une action sur un ensemble semi-alg\'ebrique localement compact, on munit le complexe de ses cha\^ines semi-alg\'ebriques \`a supports ferm\'es (\cite{MCP} Appendix) de l'action induite par fonctoralit\'e. On v\'erifie alors que l'action commute avec les op\'erations de restriction, d'adh\'erence (proposition \ref{prop_res,adh_action}) et de tir\'e en arri\`ere (proposition \ref{prop_pullback_action}).
\\

Dans cet article, suivant la d\'efinition de \cite{MCP}, une vari\'et\'e alg\'ebrique r\'eelle d\'esignera un sch\'ema r\'eduit s\'epar\'e de type fini sur $\mathbb{R}$.

Soit donc $X$ un sous-ensemble semi-alg\'ebrique de l'ensemble des points r\'eels d'une vari\'et\'e alg\'ebrique r\'eelle. Pour la commodit\'e du lecteur, on rappelle la d\'efinition du complexe des cha\^ines semi-alg\'ebriques \`a supports ferm\'es de $X$ (\cite{MCP} Appendix) :

\begin{de} Pour tout $k \geq 0$, on note $C_k(X)$ le quotient du $\mathbb{Z}_2$-espace vectoriel engendr\'e par les sous-ensembles semi-alg\'ebriques ferm\'es de $X$ de dimension $\leq k$, par les relations
\begin{itemize}
	\item la somme $A + B$ est \'equivalente \`a l'adh\'erence dans $X$ (pour la topologie forte) $cl_X(A \div B)$ de la diff\'erence sym\'etrique de $A$ et de $B$,
	\item la classe de $A$ est nulle si la dimension de $A$ est strictement plus petite que $k$.
\end{itemize}
Une cha\^ine semi-alg\'ebrique \`a supports ferm\'es de dimension $k$ de $X$ est une classe d'\'equivalence de $C_k(X)$. Toute cha\^ine $c$ de $C_k(X)$ peut s'\'ecrire comme la classe d'un sous-ensemble semi-alg\'ebrique ferm\'e $A$ de $X$ de dimension $\leq k$, not\'ee $[A]$.

L'op\'erateur de bord $\partial_k : C_k(X) \rightarrow C_{k-1}(X)$ est d\'efini par, si $c = [A] \in C_k(X)$, $\partial_k c = [\partial A]$ o\`u $\partial A$ d\'esigne le bord semi-alg\'ebrique $\{x \in A~|~\Lambda \mathbf{1}_A(X) \equiv 1~{\rm mod}~2\}$ de $A$ ($\Lambda$ est l'entrelacs sur les fonctions constructibles : cf \cite{MCP-ACF}).
\end{de}

On renvoie \`a l'appendice de \cite{MCP} pour les d\'efinitions du pouss\'e en avant, des op\'erations de restriction et d'adh\'erence, ainsi que celle du tir\'e en arri\`ere.
\\

On suppose maintenant que $X$ est muni d'une action d'un groupe $G$ qui agit par hom\'eo-morphismes semi-alg\'ebriques : pour tout $g \in G$, on note $\alpha_g : X \rightarrow X$ l'hom\'eomorphisme semi-alg\'ebrique associ\'e  \`a $g$.

Par fonctorialit\'e, on obtient alors une action sur le complexe $C_*(X)$ par isomorphismes lin\'eaires donn\'ee par ${\alpha_{g}}_* : C_*(X) \rightarrow C_*(X)$ pour $g \in G$. 

Le complexe $C_*(X)$ muni de cette action de $G$ devient alors un $G$-complexe.
\\

\begin{lem} Soient $Y$ un autre ensemble semi-alg\'ebrique localement compact muni d'une action de $G$ et $f : X \rightarrow Y$ une application semi-alg\'ebrique continue propre et \'equivariante. Alors le pouss\'e en avant $f_*$ est \'equivariant par rapport aux actions de $G$ sur les $G$-complexes $C_*(X)$ et $C_*(Y)$.
\end{lem}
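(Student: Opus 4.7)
Le plan est de tout ramener à la fonctorialité covariante du pouss\'e en avant des cha\^ines semi-alg\'ebriques \`a supports ferm\'es, telle qu'elle est rappel\'ee dans l'appendice de \cite{MCP}. Pour chaque $g \in G$, notons $\alpha_g : X \to X$ et $\beta_g : Y \to Y$ les hom\'eomorphismes semi-alg\'ebriques associ\'es. Par d\'efinition, l'action de $g$ sur $C_*(X)$ (resp. sur $C_*(Y)$) est donn\'ee par ${\alpha_g}_*$ (resp. par ${\beta_g}_*$), et l'\'equivariance de $f$ signifie exactement que $f \circ \alpha_g = \beta_g \circ f$ pour tout $g \in G$.

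La strat\'egie consiste alors simplement \`a appliquer le foncteur $(-)_*$ \`a cette \'egalit\'e. Comme $\alpha_g$ et $\beta_g$ sont des hom\'eomorphismes (donc des applications semi-alg\'ebriques continues propres), que $f$ est suppos\'ee continue propre et semi-alg\'ebrique, et que les compositions $f\circ\alpha_g$ et $\beta_g\circ f$ restent dans cette classe, la fonctorialit\'e fournit imm\'ediatement l'\'egalit\'e $f_*\circ{\alpha_g}_* = {\beta_g}_*\circ f_*$, qui n'est autre que l'\'equivariance recherch\'ee, une fois que l'on r\'einterpr\`ete chaque membre en termes des actions de $G$ sur $C_*(X)$ et $C_*(Y)$.

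Il n'y a donc pas v\'eritablement de point difficile \`a surmonter : le r\'esultat est une cons\'equence formelle de la construction fonctorielle de l'action. Le seul contr\^ole \`a faire, purement routinier, est de s'assurer que chacune des applications intervenant dans l'argument v\'erifie bien les hypoth\`eses (propret\'e, continuit\'e, semi-alg\'ebricit\'e, locale compacit\'e des espaces d'arriv\'ee et de d\'epart) sous lesquelles le pouss\'e en avant des cha\^ines semi-alg\'ebriques \`a supports ferm\'es est d\'efini et fonctoriel.
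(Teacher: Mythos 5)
Votre argument est correct et coïncide exactement avec celui du papier, qui se contente d'invoquer la fonctorialité du poussé en avant des chaînes semi-algébriques à supports fermés (\cite{MCP} Appendix) ; vous explicitez simplement le passage de l'égalité $f \circ \alpha_g = \beta_g \circ f$ à $f_* \circ {\alpha_g}_* = {\beta_g}_* \circ f_*$. Rien à redire.
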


\begin{proof} Fonctorialit\'e des cha\^ines semi-alg\'ebriques \`a supports ferm\'es (\cite{MCP} Appendix).
\end{proof}

L'action de $G$ sur une cha\^ine semi-alg\'ebrique \`a support ferm\'e peut \^etre donn\'ee par l'action sur l'un de ses repr\'esentants :

\begin{prop}
Soient $g \in G$ et $c = [A] \in C_k(X)$, alors
$$g.c = [g.A].$$
\end{prop}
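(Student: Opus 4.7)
Le plan est d'utiliser directement la d\'efinition fonctorielle de l'action sur les cha\^ines. Par construction, $g.c = (\alpha_g)_*(c)$, o\`u $\alpha_g : X \rightarrow X$ est l'hom\'eomorphisme semi-alg\'ebrique propre associ\'e \`a $g$. La t\^ache se r\'eduit donc \`a identifier $(\alpha_g)_*[A]$ avec $[\alpha_g(A)] = [g.A]$.

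Pour cela, je reviendrais \`a la d\'efinition du pouss\'e en avant des cha\^ines semi-alg\'ebriques \`a supports ferm\'es donn\'ee dans l'appendice de \cite{MCP}, qu'il est commode de reformuler via les fonctions constructibles modulo $2$ : le repr\'esentant $A$ correspond \`a la fonction indicatrice $\mathbf{1}_A$, et pour une application semi-alg\'ebrique continue propre $f$, la fonction $f_* \mathbf{1}_A$ est d\'efinie fibre par fibre par $y \mapsto \chi(f^{-1}(y) \cap A) \bmod 2$.

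L'argument cl\'e est alors la bijectivit\'e de $\alpha_g$ : \'etant un hom\'eomorphisme, les fibres $\alpha_g^{-1}(y) \cap A$ sont soit vides, soit r\'eduites \`a un unique point (de caract\'eristique d'Euler \'egale \`a $1$), de sorte que $(\alpha_g)_* \mathbf{1}_A = \mathbf{1}_{\alpha_g(A)}$. On en d\'eduit imm\'ediatement l'\'egalit\'e $(\alpha_g)_*[A] = [g.A]$ souhait\'ee.

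Il n'y a pas v\'eritablement d'obstacle dans cet argument : le seul point formellement \`a v\'erifier est que cette \'egalit\'e ne d\'epend pas du choix du repr\'esentant $A$ de $c$. Ceci r\'esulte de la compatibilit\'e de $\alpha_g$, en tant qu'hom\'eomorphisme semi-alg\'ebrique, avec la dimension, l'adh\'erence et la diff\'erence sym\'etrique qui interviennent dans les relations d\'efinissant $C_k(X)$, et qui assurent que l'application $A \mapsto [g.A]$ passe bien au quotient.
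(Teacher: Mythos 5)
Votre d\'emonstration est correcte et suit essentiellement la m\^eme d\'emarche que celle de l'article : on identifie $(\alpha_g)_*[A]$ avec $[\alpha_g(A)]$ en calculant la caract\'eristique d'Euler des fibres $\alpha_g^{-1}(x) \cap A$, qui valent $0$ ou $1$ puisque $\alpha_g$ est un hom\'eomorphisme. La v\'erification finale d'ind\'ependance du repr\'esentant est un compl\'ement inoffensif, d\'ej\`a garanti par la bonne d\'efinition du pouss\'e en avant sur les cha\^ines dans l'appendice de \cite{MCP}.
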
 

\begin{proof} On a $g.c := {\alpha_{g}}_* (c) = [B]$, o\`u
$$B = cl \{x \in X ~|~ {\alpha_{g}}_* \mathbf{1}_A(x) =  \chi(\alpha_g^{-1}(x) \cap A) \equiv 1 \mbox{ mod $2$ } \}.$$
Or pour $x \in X$, $\chi(\alpha^{-1}_g(x) \cap A) = 1$ si $\alpha_g^{-1}(x) \in A$ i.e. $x \in \alpha_g(A)$, $0$ si $\alpha_g^{-1}(x) \notin A$ i.e. $x \notin \alpha_g(A)$ (car $\alpha_g$ est un hom\'eomorphisme). Donc $B = cl \left(\alpha_g(A)\right) = \alpha_g(A)$.
\end{proof}

A partir de cette constatation, on peut montrer simplement la commutativit\'e de l'action avec les op\'erations de restriction, d'adh\'erence et le tir\'e en arri\`ere :

\begin{prop} \label{prop_res,adh_action}
Soit $Z \subset X$ un sous-ensemble semi-alg\'ebrique localement ferm\'e globalement stable sous l'action de $G$. La restriction \`a $Z$ des cha\^ines semi-alg\'ebriques \`a supports ferm\'es de $X$ commute avec l'action induite de $G$. De m\^eme, l'op\'eration d'adh\'erence des cha\^ines semi-alg\'ebriques \`a support ferm\'es de $Z$ commute avec l'action induite de $G$.
\end{prop}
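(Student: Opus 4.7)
Le plan est d'exploiter la proposition pr\'ec\'edente, qui affirme que $g \cdot [A] = [\alpha_g(A)]$ pour tout repr\'esentant semi-alg\'ebrique ferm\'e $A$ d'une cha\^ine. Puisque les op\'erations de restriction et d'adh\'erence sont elles aussi d\'ecrites au niveau des repr\'esentants ensemblistes (cf. \cite{MCP} Appendix), tout se ram\`enera \`a de simples \'egalit\'es ensemblistes entre sous-ensembles semi-alg\'ebriques de $X$, combin\'ees au fait que chaque $\alpha_g$ est un hom\'eomorphisme.

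Pour la restriction \`a $Z$, je prendrai $c = [A] \in C_k(X)$ et j'exprimerai chacun des deux membres de l'\'egalit\'e souhait\'ee via un repr\'esentant. D'un c\^ot\'e, la proposition pr\'ec\'edente appliqu\'ee dans $Z$ donne $g \cdot (c|_Z) = [\alpha_g(A \cap Z)]$ ; de l'autre, la proposition pr\'ec\'edente appliqu\'ee dans $X$, suivie de la d\'efinition de la restriction, donne $(g \cdot c)|_Z = [\alpha_g(A) \cap Z]$. L'identit\'e $\alpha_g(A \cap Z) = \alpha_g(A) \cap \alpha_g(Z) = \alpha_g(A) \cap Z$ -- o\`u la premi\`ere \'egalit\'e vient de ce que $\alpha_g$ est une bijection, et la seconde de la $G$-stabilit\'e globale de $Z$ -- conclut.

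Pour l'adh\'erence, je partirai de $d = [B] \in C_k(Z)$ et comparerai les repr\'esentants $\alpha_g(cl_X(B))$ de $g \cdot \overline{d}$ et $cl_X(\alpha_g(B))$ de $\overline{g \cdot d}$. Ces deux ensembles co\"\i ncident puisque $\alpha_g : X \to X$ est un hom\'eomorphisme et commute donc avec la prise d'adh\'erence dans $X$. Le seul point m\'eritant un peu d'attention sera le cas o\`u $Z$ n'est que localement ferm\'e et non purement ouvert ou ferm\'e : la restriction \`a un tel $Z$ se d\'efinit par composition d'une restriction \`a un ouvert et d'une adh\'erence (ou inversement), mais chaque \'etape interm\'ediaire restant manifestement \'equivariante par le m\^eme argument ensembliste, la compatibilit\'e avec l'action se propage sans obstacle. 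Aucune difficult\'e conceptuelle n'est donc \`a pr\'evoir.
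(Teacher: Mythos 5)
Votre argument est correct et co\"\i ncide essentiellement avec celui du papier : dans les deux cas on se ram\`ene, via la proposition pr\'ec\'edente, aux \'egalit\'es ensemblistes $\alpha_g(A \cap Z) = \alpha_g(A) \cap Z$ (par bijectivit\'e de $\alpha_g$ et $G$-stabilit\'e de $Z$) et $\alpha_g(cl(A)) = cl(\alpha_g(A))$ (car $\alpha_g$ est un hom\'eomorphisme). Votre digression finale sur le cas localement ferm\'e est superflue (la restriction \`a un localement ferm\'e est d\'efinie directement par $[A \cap Z]$ dans l'appendice de \cite{MCP}) mais sans incidence.
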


\begin{proof} Soient $k \geq 0$ et $c = [A] \in C_k(X)$. Alors,
$$g.(c |_Z) = [\alpha_g(A \cap Z)] = [\alpha_g(A) \cap Z] = (g.c) |_Z.$$
 
Soient $k \geq 0$ et $c = [A] \in C_k(Z)$. Alors,
$$\overline{g.c} = [cl(\alpha_g(A)] = [\alpha_g(cl(A))] = g.\bar{c}.$$
\end{proof}

\begin{prop} \label{prop_pullback_action}
On consid\`ere le diagramme commutatif d'ensembles semi-alg\'ebriques localement ferm\'es suivant :
$$\begin{array}{ccl}
\ \widetilde{Y} & \rightarrow & \widetilde{X} \\
  \downarrow &      &  \downarrow \pi \\
  Y & \xrightarrow{i} & X 
  \end{array}$$
  o\`u $\pi : \widetilde{X} \rightarrow X$ est une application semi-alg\'ebrique continue propre, $i$ est l'inclusion d'un sous-ensemble semi-alg\'ebrique ferm\'e, $\widetilde{Y} = \pi^{-1} (Y)$, et la restriction $\pi : \widetilde{X} \setminus \widetilde{Y} \rightarrow X \setminus Y$ est un hom\'eomorphisme, et o\`u tous les objets sont munis d'une action de $G$ et tous les morphismes sont \'equivariants par rapport \`a ces actions.
 
L'action de $G$ sur les cha\^ines semi-alg\'ebriques \`a supports ferm\'es commute alors avec le tir\'e en arri\`ere.
\end{prop}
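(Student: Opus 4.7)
Le plan est de revenir \`a la d\'efinition du tir\'e en arri\`ere donn\'ee dans l'appendice de \cite{MCP}, o\`u il est construit comme la composition de quatre op\'erations \'el\'ementaires sur les cha\^ines semi-alg\'ebriques : pour $c \in C_k(X)$, on restreint $c$ \`a l'ouvert $X \setminus Y$, on pousse en avant par l'hom\'eomorphisme $(\pi|_{\widetilde{X} \setminus \widetilde{Y}})^{-1}$ pour obtenir une cha\^ine de $\widetilde{X} \setminus \widetilde{Y}$, on en prend l'adh\'erence dans $\widetilde{X}$ que l'on restreint \`a $\widetilde{Y}$, et enfin on pousse en avant par l'application propre $\pi|_{\widetilde{Y}} : \widetilde{Y} \rightarrow Y$. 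Il suffira alors de v\'erifier que chacune de ces quatre \'etapes commute avec l'action de $G$.

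Premi\`erement, l'\'equivariance de $i$ et de $\pi$ entra\^ine que les sous-ensembles $Y$, $X \setminus Y$, $\widetilde{Y} = \pi^{-1}(Y)$ et $\widetilde{X} \setminus \widetilde{Y}$ sont tous globalement stables sous l'action de $G$ (pour $\widetilde{Y}$, cela r\'esulte de ce que $\pi \circ \alpha_g = \alpha_g \circ \pi$, ce qui donne $\alpha_g(\pi^{-1}(Y)) = \pi^{-1}(\alpha_g(Y)) = \pi^{-1}(Y)$). La proposition \ref{prop_res,adh_action} fournira donc l'\'equivariance des op\'erations de restriction \`a $X \setminus Y$ et \`a $\widetilde{Y}$, ainsi que celle de l'adh\'erence dans $\widetilde{X}$. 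Deuxi\`emement, l'hom\'eomorphisme $(\pi|_{\widetilde{X} \setminus \widetilde{Y}})^{-1}$ est encore \'equivariant (en tant qu'inverse d'un hom\'eomorphisme \'equivariant), et $\pi|_{\widetilde{Y}}$ est propre, continue et \'equivariante ; le lemme pr\'ec\'edent assurera alors l'\'equivariance des deux pouss\'es en avant en jeu.

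En composant ces quatre op\'erations \'equivariantes, on obtient imm\'ediatement l'\'equivariance du tir\'e en arri\`ere $i^*$. Je ne vois pas de v\'eritable difficult\'e ici : le travail s\'erieux a d\'ej\`a \'et\'e effectu\'e dans le lemme et dans la proposition \ref{prop_res,adh_action}, et il ne reste qu'\`a rassembler ces briques dans le bon ordre, en s'assurant \`a chaque \'etape que les sous-ensembles utilis\'es sont bien $G$-stables, ce qui est garanti par les hypoth\`eses d'\'equivariance du diagramme.
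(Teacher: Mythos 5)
Votre démonstration est correcte et suit exactement la même démarche que celle de l'article, qui se contente d'observer en une ligne que l'action de $G$ commute avec l'adhérence, le poussé en avant et la restriction, donc avec le tiré en arrière. Vous ne faites qu'expliciter cette composition et vérifier la $G$-stabilité des sous-ensembles intermédiaires, ce qui est un complément de détail bienvenu mais n'apporte pas d'idée différente.
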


\begin{proof}L'action de $G$ commute avec l'adh\'erence, le pouss\'e en avant, la restriction donc avec le tir\'e en arri\`ere.
\end{proof}

On \'etablit enfin deux autres lemmes, utilis\'es dans la preuve du corollaire \ref{decoup_fin}. Dans ce r\'esultat fondamental de notre \'etude, on parviendra \`a d\'ecouper une cha\^ine semi-alg\'ebrique invariante sous une involution, modulo sa restriction aux points fixes, en deux cha\^ines images l'une de l'autre et dont le ``degr\'e de r\'egularit\'e'' (par rapport \`a la filtration dite Nash-constructible de \cite{MCP}) ne s'\'eloignera pas trop du degr\'e initial de la cha\^ine de d\'epart.

\begin{lem} \label{lem_1_2_3}Soient $A_1$, $A_2$ et $A_3$ trois sous-ensembles semi-alg\'ebriques ferm\'es de $X$ de dimension $k$. Si $[A_1] = [A_2]$ dans $C_k(X)$, alors
$[A_1 \cap A_3] = [A_2 \cap A_3]$.
\end{lem}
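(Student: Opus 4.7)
Le plan est de traduire l'\'egalit\'e $[A_1]=[A_2]$ dans $C_k(X)$ en une majoration de dimension, puis d'utiliser une identit\'e ensembliste \'el\'ementaire pour transf\'erer cette majoration \`a l'intersection avec $A_3$.

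Plus pr\'ecis\'ement, d'apr\`es les relations d\'efinissant $C_k(X)$, on a $[A_1]+[A_2]=[cl_X(A_1\div A_2)]$, et cette classe est nulle d\`es que $\dim\bigl(cl_X(A_1\div A_2)\bigr)<k$, ce qui \'equivaut \`a $\dim(A_1\div A_2)<k$ puisque la dimension d'un ensemble semi-alg\'ebrique co\"incide avec celle de son adh\'erence. L'hypoth\`ese $[A_1]=[A_2]$ se r\'e\'ecrit donc comme $\dim(A_1\div A_2)<k$.

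L'\'etape cl\'e consiste ensuite \`a \'etablir l'identit\'e ensembliste
$$(A_1\cap A_3)\div(A_2\cap A_3)=(A_1\div A_2)\cap A_3,$$
qui s'obtient imm\'ediatement en d\'ecomposant chaque diff\'erence sym\'etrique selon $(A\cap A_3)\setminus(B\cap A_3)=(A\setminus B)\cap A_3$, puis en prenant la r\'eunion. Comme le membre de droite est inclus dans $A_1\div A_2$, sa dimension est strictement inf\'erieure \`a $k$, et il en va de m\^eme de son adh\'erence dans $X$. Par la m\^eme traduction dimension-classe appliqu\'ee \`a $A_1\cap A_3$ et $A_2\cap A_3$ (qui sont bien ferm\'es et de dimension $\leq k$ dans $X$), on obtient $[A_1\cap A_3]=[A_2\cap A_3]$ dans $C_k(X)$.

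Il n'y a pas d'obstacle majeur dans ce lemme : la preuve est essentiellement formelle. Le seul point \`a manipuler avec soin est la lecture de l'\'egalit\'e dans $C_k(X)$ en termes de dimension de la diff\'erence sym\'etrique, qui d\'ecoule directement des deux relations d\'efinissant ce $\mathbb{Z}_2$-espace vectoriel.
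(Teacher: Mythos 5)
Votre preuve est correcte et suit exactement la m\^eme d\'emarche que celle de l'article : l'identit\'e ensembliste $(A_1\cap A_3)\div(A_2\cap A_3)=(A_1\div A_2)\cap A_3$ combin\'ee \`a la relation $[A]+[B]=[cl_X(A\div B)]$ et \`a la majoration $\dim(A_1\div A_2)<k$ d\'eduite de $[A_1]=[A_2]$. Vous explicitez simplement un peu plus la traduction entre \'egalit\'e de classes et dimension de la diff\'erence sym\'etrique.
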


\begin{proof} On a $[A_1 \cap A_3] + [A_2 \cap A_3] = [cl\left( \left(A_1 \cap A_3\right) \div \left(A_2 \cap A_3\right)\right)]$, or $\left(A_1 \cap A_3\right) \div \left(A_2 \cap A_3\right)= (A_1 \div A_2) \cap A_3$ et $\dim ( A_1 \div A_2) < k$ car $[A_1] = [A_2]$.
\end{proof}

\begin{lem} \label{lem_1_2_cl} Soient $A_1$, $A_2$ deux sous-ensembles semi-alg\'ebriques de $X$ de dimension $k$ avec $A_1$ ferm\'e dans $X$. Alors, dans $C_k(X)$, $[cl(A_1 \cap A_2)] = [A_1 \cap cl(A_2)]$.
\end{lem}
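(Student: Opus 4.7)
The plan is to verify directly that the two closed semi-algebraic subsets $cl(A_1 \cap A_2)$ and $A_1 \cap cl(A_2)$ of $X$ represent the same class in $C_k(X)$ by showing that their symmetric difference has dimension strictly less than $k$.

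First I would note that both sets are indeed closed semi-algebraic subsets of $X$ of dimension at most $k$: the first by construction, the second as the intersection of two closed semi-algebraic sets, whose dimension is bounded by $\dim(A_1) = k$. Moreover, since $A_1 \cap A_2 \subseteq A_1 \cap cl(A_2)$ and $A_1 \cap cl(A_2)$ is closed, we obtain the inclusion
$$cl(A_1 \cap A_2) \subseteq A_1 \cap cl(A_2),$$
so the symmetric difference equals $\bigl(A_1 \cap cl(A_2)\bigr) \setminus cl(A_1 \cap A_2)$, which is itself contained in $\bigl(A_1 \cap cl(A_2)\bigr) \setminus \bigl(A_1 \cap A_2\bigr) = A_1 \cap \bigl(cl(A_2) \setminus A_2\bigr)$.

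The key point is then the classical dimension inequality for semi-algebraic sets: $\dim\bigl(cl(A_2) \setminus A_2\bigr) < \dim(A_2) = k$. It follows that the symmetric difference, and hence its closure in $X$, has dimension strictly less than $k$. Using the defining relations of $C_k(X)$, one gets
$$[cl(A_1 \cap A_2)] + [A_1 \cap cl(A_2)] = \bigl[cl_X\bigl(cl(A_1 \cap A_2) \div (A_1 \cap cl(A_2))\bigr)\bigr] = 0,$$
which yields the desired equality.

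There is no real obstacle here; the argument rests entirely on the dimension inequality $\dim(cl(A_2) \setminus A_2) < \dim A_2$ valid for semi-algebraic sets, together with the two defining relations of $C_k(X)$. The only care needed is to check that the symmetric difference is effectively contained in the lower-dimensional semi-algebraic piece coming from the frontier of $A_2$, which follows from the inclusion $cl(A_1 \cap A_2) \subseteq A_1 \cap cl(A_2)$ established above.
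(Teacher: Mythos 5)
Your argument is correct and is essentially identical to the paper's proof: both establish the inclusion $cl(A_1 \cap A_2) \subseteq A_1 \cap cl(A_2)$, reduce the symmetric difference to a subset of $A_1 \cap (cl(A_2) \setminus A_2)$, and conclude via the dimension inequality $\dim(cl(A_2) \setminus A_2) < k$. You merely spell out the intermediate steps that the paper leaves implicit.
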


\begin{proof} On a 
$$cl(A_1 \cap A_2) \div \left(A_1 \cap cl(A_2) \right) = A_1 \cap cl(A_2) \setminus cl(A_1 \cap A_2) \subset A_1 \cap cl(A_2) \setminus A_1 \cap A_2 = A_1 \cap (cl(A_2) \setminus A_2)$$
et $\dim( cl(A_2)\setminus A_2) < k$.
\end{proof}

\section{Complexe de poids avec action} \label{chap_comp_poids_act}

Soit $G$ un groupe fini.  

On d\'efinit (th\'eor\`eme \ref{th_complex-poids-act}) un complexe de poids avec action de $G$ sur la cat\'egorie de ce que l'on appellera les $G$-vari\'et\'es alg\'ebriques r\'eelles, fonctoriel par rapport aux morphismes propres r\'eguliers \'equivariants, et unique \`a quasi-isomorphisme filtr\'e \'equivariant pr\`es avec les propri\'et\'es d'extension, d'acyclicit\'e et d'additivit\'e analogues \`a celles du cadre sans action de \cite{MCP} Theorem 1.1. Pour son existence, cela consistera \`a munir le complexe de poids de McCrory-Parusi\'nski de l'action de $G$ induite par fonctorialit\'e. Quant \`a son unicit\'e, elle sera donn\'ee par une version avec action du crit\`ere d'extension de F. Guill\'en et V. Navarro Aznar (\cite{GNA} Th\'eor\`eme 2.2.2), qui justifie la restriction au cas d'un groupe fini pour lequel il existe une compactification \'equivariante, un lemme de Chow-Hironaka \'equivariant ainsi qu'une r\'esolution des singularit\'es \'equivariante.

On pr\^etera ensuite attention au groupe $G = \mathbb{Z}/2\mathbb{Z}$. En effet, d\`es le plus petit groupe non trivial, les structures avec action s'enrichissent consid\'erablement. Dans la section \ref{suite_exacte_smith_nash_cons}, on \'etablit une version de la suite exacte courte de Smith qui tient compte de la filtration Nash-constructible (qui r\'ealise le complexe de poids par \cite{MCP} Theorem 2.8 et Corollary 3.11). Ce r\'esultat repose sur un th\'eor\`eme de d\'ecoupage des vari\'et\'es Nash munies d'une involution alg\'ebrique, que l'on \'etablit dans la section \ref{section_decoup_var_nash}.
\\

Tout d'abord, d\'efinissons pr\'ecis\'ement les cat\'egories sur lesquelles nous allons travailler. Dans toute la suite, une action de $G$ par isomorphismes alg\'ebriques sur une vari\'et\'e alg\'ebrique r\'eelle $X$ d\'esignera une action par isomorphismes de sch\'emas telle que l'orbite de tout point de $X$ est contenue dans un sous-sch\'ema ouvert affine.

\begin{de} On note 
\begin{itemize}
	\item $\mathbf{Sch}_c^G(\mathbb{R})$ la cat\'egorie des vari\'et\'es alg\'ebriques r\'eelles munies d'une action de $G$ par isomorphismes alg\'ebriques -on nomme de tels objets des $G$-vari\'et\'es alg\'ebriques r\'eelles- et des morphismes propres r\'eguliers \'equivariants,
	\item $\mathbf{Reg}_{comp}^G(\mathbb{R})$ la sous-cat\'egorie des $G$-vari\'et\'es compactes non singuli\`eres,
	\item $\mathbf{V}^G(\mathbb{R})$ la sous-cat\'egorie des $G$-vari\'et\'es projectives non singuli\`eres.
\end{itemize}
On note \'egalement 
\begin{itemize}
	\item $\mathcal{C}^G$ la cat\'egorie des $G$-complexes de $\mathbb{Z}_2$-espaces vectoriels born\'es munis d'une filtration croissante born\'ee par des $G$-complexes avec inclusions \'equivariantes -on nomme de tels objets des $G$-complexes filtr\'es- et des morphismes de complexes filtr\'es \'equivariants,
	\item $\mathcal{D}^G$ la cat\'egorie des $G$-complexes born\'es et des morphismes de complexes \'equivariants.
\end{itemize}
\end{de}

\begin{rem} 

\begin{itemize} 
	\item Si $X$ est une $G$-vari\'et\'e alg\'ebrique r\'eelle, on a vu que l'action de $G$ sur~$X$ induit, par fonctorialit\'e de $C_* : \mathbf{Sch}_c(\mathbb{R}) \rightarrow \mathcal{D}~;~X \mapsto C_*(X) := C_*(X(\mathbb{R}))$, une action de $G$ sur $C_*(X)$, et donc \'egalement une action (lin\'eaire) sur l'homologie $H_*(X) := H_*(C_*(X))$, qui est l'homologie de Borel-Moore de l'ensemble des points r\'eels de $X$ \`a coefficients dans~$\mathbb{Z}_2$ (\cite{MCP} section 1). On a ainsi un foncteur
$$C_* : \mathbf{Sch}_c^G(\mathbb{R}) \rightarrow \mathcal{D}^G~;~X \mapsto C_*(X).$$
	\item Si $(K_*, F_*)$ est un $G$-complexe filtr\'e, la suite spectrale induite est naturellement munie d'une action de groupe : chaque terme est munie de l'action de $G$ induite, et les diff\'erentielles sont \'equivariantes pour celle-ci.
	\item Le complexe simple filtr\'e associ\'e \`a un diagramme cubique dans $\mathcal{C}^G$ (\cite{MCP} section 1.1) peut \^etre naturellement muni de l'action du groupe $G$ induite (en consid\'erant l'action diagonale sur les sommes directes), et devenir ainsi un \'el\'ement de $\mathcal{C}^G$.
\end{itemize}

\end{rem}

Dans la continuit\'e de ce qu'ont fait C. McCrory et A. Parusi\'nski dans le cadre sans action de groupe, on s'int\'eresse aux morphismes filtr\'es qui induisent des isomorphismes au niveau des suites spectrales :

\begin{de} On note $H o ~ \mathcal{C}^G$ la cat\'egorie $\mathcal{C}^G$ localis\'ee par rapport aux quasi-isomorphismes filtr\'es \'equivariants, autrement appel\'es quasi-isomorphismes de $\mathcal{C}^G$, i.e. les morphismes filtr\'es \'equivariants entre $G$-complexes filtr\'es qui induisent un isomorphisme (\'equivariant) au niveau $E^1$ des suites spectrales induites.
\end{de}

\begin{rem} Tout $G$-complexe $K_*$ de $\mathcal{D}^G$ peut \^etre muni de la filtration canonique (\cite{MCP} section 1.1) sur laquelle agit naturellement le groupe $G$ (le noyau de la diff\'erentielle de $K_*$, qui est \'equivariante, est stable sous l'action de $G$), et ainsi $(K_*,F^{can}_*)$ est un \'el\'ement de $\mathcal{C}^G$.

De cette fa\c{c}on, un quasi-isomorphisme \'equivariant entre $G$-complexes born\'es induit un quasi-isomorphisme filtr\'e \'equivariant entre $G$-complexes filtr\'es munis de la filtration canonique.
\end{rem}

Nous allons \`a pr\'esent pouvoir \'enoncer le th\'eor\`eme d'existence et d'unicit\'e (dans $H o ~ \mathcal{C}^G$) du complexe de poids avec action du groupe $G$.

L'existence sera donn\'ee directement par la fonctorialit\'e du complexe de poids (sans action) de C. McCrory et A. Parusi\'nski. Son unicit\'e requerra une version avec action du crit\`ere d'extension de F. Guill\'en et V. Navarro Aznar \'enonc\'e dans \cite{GNA} :

\begin{theo} \label{crit_action} Soient $\mathcal{C}$ une cat\'egorie de descente cohomologique et  
$$F : \mathbf{V}^G(\mathbb{R}) \longrightarrow H o ~ \mathcal{C}$$
un foncteur contravariant $\Phi$-rectifi\'e qui v\'erifie 
\begin{itemize}
\item[(F1)] $F(\emptyset) = 0$, et le morphisme canonique $F(X \sqcup Y) \rightarrow F(X) \times F(Y)$ est un isomorphisme (dans $H o ~ \mathcal{C}$),
\item[(F2)] si $X_{\bullet}$ est un carr\'e acyclique \'el\'ementaire de $\mathbf{V}^G(\mathbb{R})$, $\mathbf{s}F(X_{\bullet})$ est acyclique.
\end{itemize}
Alors, il existe une extension de $F$ en un foncteur contravariant $\Phi$-rectifi\'e
$$F_c : \mathbf{Sch}_c^G(\mathbb{R}) \rightarrow H o ~ \mathcal{C}$$
telle que :
\begin{enumerate}
	\item si $X_{\bullet}$ est un carr\'e acyclique de $\mathbf{Sch}_c^G(\mathbb{R})$, $\mathbf{s}F_c(X_{\bullet})$ est acyclique,
	\item si $Y$ est une sous-vari\'et\'e ferm\'ee de $X$ stable sous l'action de $G$ sur $X$, on a un isomorphisme naturel (dans $H o ~ \mathcal{C}$)
$$F_c(X \setminus Y) \cong \mathbf{s} (F_c(X) \rightarrow F_c(Y)).$$
\end{enumerate}
En outre, cette extension est unique, \`a isomorphisme unique pr\`es.
\end{theo}

\begin{rem} \label{val_crit_action} Le crit\`ere d'extension (\cite{GNA} Th\'eor\`eme 2.2.2) reste bien valable dans ce contexte. En effet, celui-ci n\'ecessite, sur la cat\'egorie de vari\'et\'es consid\'er\'ee, une r\'esolution des singularit\'es, un lemme de Chow-Hironaka et une compactification. Or, sur la cat\'egorie $\mathbf{Sch}_c^G(\mathbb{R})$, comme le groupe~$G$ est d'ordre fini, une r\'esolution des singularit\'es \'equivariante, un lemme de Chow-Hironaka \'equivariant ainsi qu'une compactification \'equivariante existent par \cite{DL} (Appendix). 
\end{rem}

\begin{theo} \label{th_complex-poids-act} Le foncteur 
$$F^{can} C_{*} : \mathbf{V}^G(\mathbb{R}) \longrightarrow H o ~ \mathcal{C}^G~;~X \mapsto F^{can} C_*(X)$$
admet une extension en un foncteur 
$${}^G\!\mathcal{W} C_{*} : \mathbf{Sch}_c^G(\mathbb{R}) \longrightarrow H o ~ \mathcal{C}^G$$
d\'efini pour toutes les $G$-vari\'et\'es alg\'ebriques r\'eelles et tous les morphismes propres r\'eguliers \'equivariants, qui v\'erifie les propri\'et\'es suivantes :
\begin{enumerate}
	\item Acyclicit\'e : Pour tout carr\'e acyclique 
	$$\begin{array}{ccc}
\ \widetilde{Y} & \rightarrow & \widetilde{X} \\
  \downarrow &      &  \downarrow  \\
  Y & \rightarrow & X 
  \end{array}$$	
dans $\mathbf{Sch}_c^G(\mathbb{R})$, le complexe filtr\'e simple du $\square_1^{+}$-diagramme dans $\mathcal{C}^G$
	$$\begin{array}{ccc}
\ {}^G\!\mathcal{W} C_{*}(\widetilde{Y}) & \rightarrow & {}^G\!\mathcal{W} C_{*}(\widetilde{X}) \\
  \downarrow &      &  \downarrow  \\
  {}^G\!\mathcal{W} C_{*}(Y) & \rightarrow & {}^G\!\mathcal{W} C_{*}(X) 
  \end{array}$$
  est acyclique (i.e. isomorphe au complexe nul dans $H o ~ \mathcal{C}^G$).
  	\item Additivit\'e : Pour une inclusion ferm\'ee \'equivariante $Y \subset X$, le complexe filtr\'e simple du $\square_0^{+}$-diagramme dans $\mathcal{C}^G$
	$${}^G\!\mathcal{W} C_{*}(Y) \rightarrow {}^G\!\mathcal{W} C_{*}(X)$$
	est isomorphe \`a ${}^G\!\mathcal{W} C_{*}(X \setminus Y)$.
\end{enumerate}
Un tel foncteur ${}^G\!\mathcal{W} C_{*}$ est unique \`a un isomorphisme de $H o ~ \mathcal{C}^G$ unique pr\`es.
\end{theo}

\begin{proof}

\textbf{\underline{Existence.}} Soit $X$ une $G$-vari\'et\'e alg\'ebrique r\'eelle. On a une action de $G$ sur le complexe filtr\'e de poids $\mathcal{W} C_*(X)$ de $X$ induite par fonctorialit\'e du complexe de poids.

De plus, tout morphisme propre r\'egulier \'equivariant induira, toujours par fonctorialit\'e de $\mathcal{W} C_*$, un morphisme de $H o ~ \mathcal{C}$ \'equivariant, i.e. un morphisme de $H o ~ \mathcal{C}^G$. 

On obtient donc ainsi un foncteur ${}^G\!\mathcal{W} C_* :  \mathbf{Sch}_c^G(\mathbb{R}) \rightarrow H o ~ \mathcal{C}^G$, celui qui \`a toute $G$-vari\'et\'e alg\'ebrique r\'eelle $X$ associe son complexe filtr\'e de poids $\mathcal{W} C_*(X)$ muni de l'action induite.
\\

Ce foncteur est bien une extension de $F^{can} C_{*} : \mathbf{V}^G(\mathbb{R}) \longrightarrow H o ~ \mathcal{C}^G$ car le complexe de poids $\mathcal{W} C_{*} : \mathbf{Sch}_c(\mathbb{R}) \longrightarrow H o ~ \mathcal{C}$ est une extension de $F^{can} C_{*} : \mathbf{V}(\mathbb{R}) \longrightarrow H o ~ \mathcal{C}$ (pour $X \in \mathbf{V}^G(\mathbb{R})$, l'action sur le complexe filtr\'e $F^{can} C_*(X)$ est \'egalement induite par fonctorialit\'e).
\\

Il v\'erifie enfin les conditions d'acyclicit\'e et d'additivit\'e (le morphisme nul est \'equivariant et, dans le th\'eor\`eme 2.2.2 de \cite{GNA}, la condition d'additivit\'e repose sur une \'egalit\'e permettant l'extension du foncteur aux vari\'et\'es non compactes, ce qui la rend a fortiori \'equivariante).
\\

\textbf{\underline{Unicit\'e.}} Pour l'unicit\'e, nous utiliserons le crit\`ere d'extension de Guill\'en-Navarro Aznar {\rm (\cite{GNA} Th\'eor\`eme 2.2.2)} adapt\'e au cadre avec action du groupe $G$ que l'on a \'enonc\'e pr\'ec\'edemment (th\'eor\`eme \ref{crit_action}). Appliquons-le (tout du moins sa version homologique et covariante) \`a notre contexte pour obtenir l'unicit\'e du complexe de poids avec action de $G$ :
\begin{itemize}
	\item $\mathcal{C}^G$ est une cat\'egorie de descente homologique, en tant que cat\'egorie des complexes de cha\^ines de $\mathbb{Z}_2[G]$-modules \`a gauche (i.e. les $\mathbb{Z}_2$-espaces vectoriels munis d'une action lin\'eaire du groupe $G$), born\'es et munis d'une filtration croissante born\'ee. En effet, les $\mathbb{Z}_2[G]$-modules \`a gauche forment une cat\'egorie ab\'elienne et on utilise alors la propri\'et\'e (1.7.5) de \cite{GNA}. 

	\item Le foncteur $F^{can} C_{*}$ est $\Phi$-rectifi\'e, car il est d\'efini sur la cat\'egorie $\mathcal{C}^G$.
	\item Le foncteur $F^{can} C_*$ v\'erifie les conditions (F1) et (F2) dans le cadre sans action. Munissant, dans la preuve de \cite{MCP} Theorem 1.1, les vari\'et\'es projectives lisses d'une action ag\'ebrique de $G$, tous les morphismes mentionn\'es, induits par l'application du foncteur $F^{can} C_*$, sont alors \'equivariants, et celui-ci v\'erifie donc bien les conditions (F1) et (F2) dans le cadre avec action de $G$. 

\end{itemize}

\end{proof}

\begin{de} Si $X$ est une $G$-vari\'et\'e alg\'ebrique r\'eelle, le $G$-complexe filtr\'e ${}^G\!\mathcal{W} C_*(X)$ (d\'efini \`a quasi-isomorphisme filtr\'e \'equivariant pr\`es) est appel\'e le complexe de poids avec action de $X$.
\end{de}

\begin{rem} \label{rem_fil_geom_act}
\begin{itemize} 
	\item L'isomorphisme $H_n({}^G\!\mathcal{W} C_*(X)) \cong H_n(X)$ pour toute $G$-vari\'et\'e r\'eelle $X$ (\cite{MCP} Proposition 1.5) est \'equivariant. En effet, dans la preuve de \cite{MCP} Proposition 1.5, les foncteurs $C_* : \mathbf{Sch}_c(\mathbb{R}) \rightarrow H o ~ \mathcal{D}$ et $\varphi \circ \mathcal{W} C_* : \mathbf{Sch}_c(\mathbb{R}) \rightarrow H o ~ \mathcal{D}$ sont isomorphes. Ainsi, si $X$ est une $G$-vari\'et\'e alg\'ebrique r\'eelle, le quasi-isomorphisme entre $\varphi \circ \mathcal{W} C_*(X)$ et $C_*(X)$ est \'equivariant par rapport aux actions induites par fonctorialit\'e.
	\item La filtration par le poids et la suite spectrale de poids se retrouvent munies de l'action induite du groupe $G$. 
	\item La filtration g\'eom\'etrique/Nash-constructible de McCrory et Parusi\'nski (\cite{MCP} sections 2 et 3), munie de l'action de $G$ induite par fonctorialit\'e, r\'ealise le complexe de poids avec action des $G$-vari\'et\'es alg\'ebriques r\'eelles. En effet, lorsque l'on munit les vari\'et\'es consid\'er\'ees d'une action alg\'ebrique de $G$, les morphismes des suites exactes courtes de \cite{MCP} Theorem 2.7 et Theorem 3.6 sont \'equivariants pour les actions induites, ainsi que les inclusions $\mathcal{G}_p C_k(X) \subset F_p^{can} C_k(X)$ pour toute $G$-vari\'et\'e r\'eelle $X$.
	 \item Comme dans le cadre sans action (\cite{MCP} Proposition 1.8), le complexe de poids ${}^G\!\mathcal{W} C_*(X)$ avec action de~$G$ d'une vari\'et\'e compacte non singuli\`ere est quasi-isomorphe dans $\mathcal{C}^G$ \`a $F^{can} C_*(X)$. Il en va de m\^eme pour toutes ses r\'ealisations.	
\end{itemize}

\end{rem}

Dans la suite, le complexe de poids avec action d'une $G$-vari\'et\'e r\'eelle $X$ sera simplement not\'e $\mathcal{W} C_*(X)$ lorsque le contexte sera explicite.

\section{Le d\'ecoupage d'une vari\'et\'e Nash affine compacte munie d'une action de $G = \mathbb{Z}/2\mathbb{Z}$} \label{section_decoup_var_nash}

On souhaite d\'ecouper toute cha\^ine invariante d'une vari\'et\'e alg\'ebrique r\'eelle avec action de $G = \mathbb{Z}/2\mathbb{Z} = \{1, \sigma\}$, apr\`es en avoir retir\'e la partie invariante point par point, en deux morceaux qui soient l'image l'un de l'autre par l'involution $\sigma$, mais des morceaux dont on puisse contr\^oler la r\'egularit\'e vis-\`a-vis de la filtration Nash-constructible.

On s'int\'eresse au cas particulier du groupe \`a deux \'el\'ements car il est le premier cas difficile, tant pour cette question particuli\`ere que pour l'\'etude des vari\'et\'es alg\'ebriques r\'eelles avec action de groupe en g\'en\'eral. 

L'ingr\'edient-cl\'e qui nous permettra d'effectuer ce d\'ecoupage sera le r\'esultat suivant. Ce th\'eor\`eme affirme que l'on peut d\'ecouper toute vari\'et\'e Nash affine compacte connexe munie d'une involution alg\'ebrique le long d'un sous-ensemble sym\'etrique par arcs (\cite{Kur}, \cite{KP}), globalement invariant sous l'action. Une sous-vari\'et\'e Nash d'un espace affine $\mathbb{R}^N$ est une sous-vari\'et\'e $\mathcal{C}^{\infty}$ de $\mathbb{R}^N$ qui en est \'egalement un sous-ensemble semi-alg\'ebrique. On renvoie \`a \cite{Shi} pour tout le mat\'eriel concernant les vari\'et\'es Nash et les fonctions Nash, i.e les fonctions $\mathcal{C}^{\infty}$ semi-alg\'ebriques.

\begin{theo} \label{decoup_Nash} Soit $M$ une sous-vari\'et\'e Nash compacte connexe d'un espace affine $\mathbb{R}^N$ munie d'une involution alg\'ebrique $\sigma$ (i.e. la restriction d'une involution alg\'ebrique sur l'adh\'erence de Zariski de M) non triviale. Alors il existe un sous-ensemble sym\'etrique par arcs $S$ de $M$ de codimension $1$ globalement stable sous l'action de $\sigma$, et un sous-ensemble semi-alg\'ebrique ferm\'e $A$ de $M$ tels que $M = A \cup \sigma(A)$, $S = A \cap \sigma(A)$ et $S = \partial A$.
\end{theo}

\begin{center}
\includegraphics{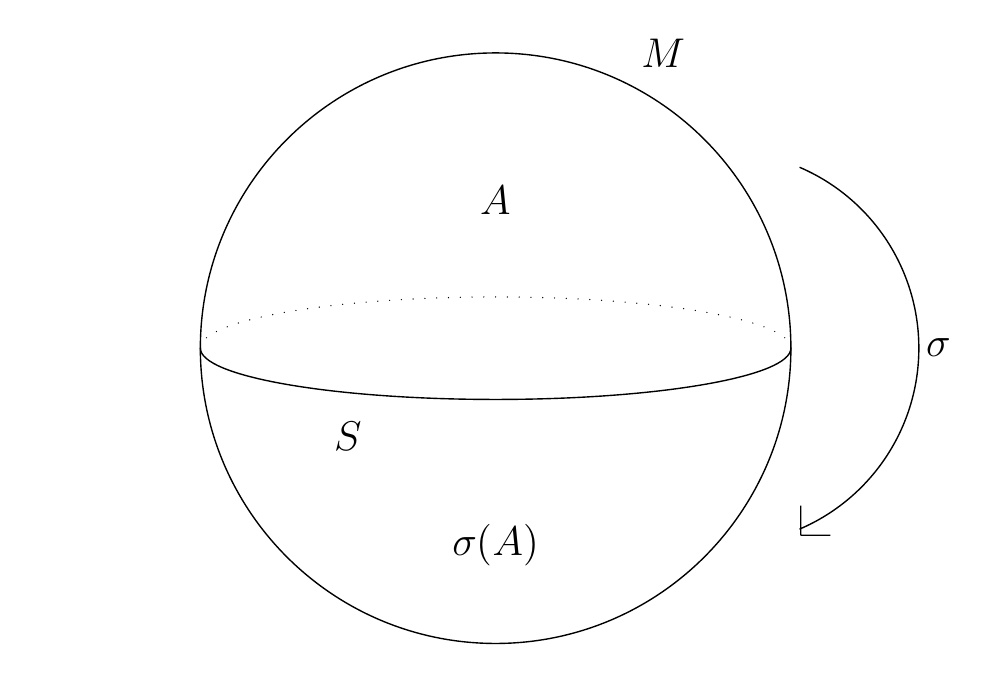}
\end{center} 

\vspace{0.5cm}
 
\begin{proof} La d\'emonstration de ce r\'esultat va se baser sur l'obtention d'une application classifiante entre le quotient de $M$ priv\'e de ses points fixes, que l'on notera $N$, par l'action de $G$ et un espace projectif r\'eel. On approchera cette fonction, tout du moins son extension \`a la compactificaction Nash de $N/G$, via le th\'eor\`eme de Stone-Weierstrass et l'utilisation d'un voisinage tubulaire Nash (\cite{BCR} Corollary 8.9.5), par une fonction Nash, transverse au sous-espace projectif de codimension $1$, que l'on rel\`evera ensuite en une fonction \'equivariante d\'efinie sur $M \setminus M^G$ et \`a valeurs dans une sph\`ere munie de l'action antipodale, Nash et transverse \`a la sph\`ere de codimension $1$. Consid\'erant alors l'image r\'eciproque de celle-ci, ainsi que celles des deux h\'emisph\`eres, auxquels on rajoute les points fixes, on parvient \`a d\'ecouper $M$ en deux sous-ensembles semi-alg\'ebriques, images l'une de l'autre par l'involution, le long d'un sous-ensemble sym\'etrique par arcs.
\\ 

\underline{(i) Quotient.} Consid\'erons donc $N$ la vari\'et\'e $M$ priv\'ee de ses points fixes. $N$ est une sous-vari\'et\'e Nash de $\mathbb{R}^N$, de m\^eme dimension que $M$ (car l'action de $G$ sur $M$ n'est pas triviale) et munie d'une action libre de $G$ (o\`u $G = \{id, \sigma \} = \mathbb{Z}/2\mathbb{Z}$). Le quotient $N/G$ est une vari\'et\'e Nash. En effet, le quotient d'un ensemble semi-alg\'ebrique par l'action (semi-alg\'ebrique) d'un groupe est un ensemble semi-alg\'ebrique (\cite{PS}), et le quotient d'une vari\'et\'e (diff\'erentiable) lisse (i.e. $\mathcal{C}^{\infty}$) par l'action libre (et lisse) d'un groupe fini est une vari\'et\'e lisse (\cite{Lee} Theorem 7.10). La vari\'et\'e Nash $N/G$ est de plus affine, en tant que sous-ensemble (semi-alg\'ebrique) de l'ensemble des points r\'eels du quotient de la complexification de l'adh\'erence de Zariski de $M$, qui est une vari\'et\'e complexe projective, par la complexification de l'action de $G$, quotient qui est lui aussi une vari\'et\'e projective complexe. 
\\

\underline{(ii) Compactification.} La vari\'et\'e $N/G$ \'etant une vari\'et\'e Nash affine, on peut la compactifier en une vari\'et\'e Nash affine avec bord d'apr\`es le th\'eor\`eme VI.2.1. de \cite{Shi}. Plus pr\'ecis\'ement, il existe une vari\'et\'e alg\'ebrique affine compacte non-singuli\`ere $X'$, une sous-vari\'et\'e alg\'ebrique non-singuli\`ere $Y'$ de codimension $1$ (vide si $N/G$ est compacte i.e. si $M^G = \emptyset$), une composante connexe $N'$ de $X \setminus Y$ telles qu'il existe un diff\'eomorphisme Nash $\psi : N/G \rightarrow N'$ et $M' := cl(N')$ est une vari\'et\'e Nash \`a bord compacte (connexe), de bord $Y$. 
\\

\underline{(iii) Application classifiante.} Le groupe $G = \mathbb{Z}/2\mathbb{Z}$ agissant librement sur le vari\'et\'e lisse~$N$, on peut consid\`erer le $\mathbb{Z}/2 \mathbb{Z}$-fibr\'e principal lisse $\Pi : N \rightarrow N/G$ ainsi qu'une application classifiante lisse $f : N/G \rightarrow \mathbb{P}^M(\mathbb{R})$ (\cite{Hus}). Remarquons que ``\,l'\,'' application classifiante est d\'efinie \`a homotopie pr\`es. Aussi, notre but va \^etre d\`es \`a pr\'esent de construire une application classifiante $h : N/G \rightarrow \mathbb{P}^{M}(\mathbb{R})$ qui sera Nash et transverse \`a $\mathbb{P}^{M-1}(\mathbb{R})$.
\\

\underline{(iv) Extension.} On commence pour cela par composer $f$ par $\psi^{-1}$ : on obtient une application lisse $f' := f \circ \psi^{-1} : N' \rightarrow \mathbb{P}^M(\mathbb{R})$. Puis on \'etend \`a homotopie pr\`es l'application $f'$ \`a la compactification Nash $M'$ de $N'$, ceci gr\^ace \`a une propri\'et\'e \'enonc\'ee par M. Shiota dans \cite{Shi}. L'application $f'$ \'etant en effet une application continue entre $N' = M' \setminus \partial M'$ et $\mathbb{P}^M(\mathbb{R})$, o\`u $M'$ et $\mathbb{P}^M(\mathbb{R})$ sont en particulier des vari\'et\'es PL (piecewise-linear) compactes, le lemme V.1.4 de \cite{Shi} nous dit que $f'$ est homotope \`a la restriction \`a $N'$ d'une application PL, et donc en particulier continue semi-alg\'ebrique, $\overline{f'} : M' \rightarrow \mathbb{P}^M(\mathbb{R})$. On va par la suite chercher \`a approcher la fonction $\overline{f'}$ gr\^ace \`a un voisinage tubulaire Nash.
\\

\underline{(v) Approximations.} Plongeons donc $M'$ dans un espace affine $\mathbb{R}^{N_0}$ et $\mathbb{P}^M(\mathbb{R})$ dans un espace affine $\mathbb{R}^{M_0}$. On consid\`ere un voisinage tubulaire Nash $(U, \rho : U \rightarrow \mathbb{P}^M(\mathbb{R}))$ de $\mathbb{P}^M(\mathbb{R})$ dans $\mathbb{R}^{M_0}$ (\cite{BCR} Corollary 8.9.5). 

$M'$ \'etant compact, on approche alors $h_0 := \overline{f'}$ par une application polynomiale $h_1$ en utilisant le th\'eor\`eme de Stone-Weirstrass (\cite{BCR} Theorem 8.8.5). Puis on approche l'application $h_1$ par une application lisse $h_2$ transverse \`a  $\mathbb{P}^{M-1}(\mathbb{R})$. Enfin, cette derni\`ere condition \'etant ouverte, on approche $h_2$ par une application polynomiale (en utilisant encore une fois Stone-Weierstrass) de fa\c{c}on suffisamment proche pour obtenir une application $h_3$ qui soit polynomiale et transverse \`a $\mathbb{P}^{M-1}(\mathbb{R})$. Pr\'ecisons que l'on effectue ces approximations de mani\`ere \`a ce qu'\`a chaque fois, pour tout $t \in [0,1]$ et pour tout $x \in M/G$, $(1-t)h_i(x) + t h_{i+1}(x) \in U$.

Consid\'erant alors les homotopies successives
$$(t,x) \mapsto \rho((1-t) h_i(x) + t h_{i+1}(x)),$$
on obtient ainsi une homotopie entre $\overline{f'} : M/G \rightarrow \mathbb{P}^M(\mathbb{R})$ et $\overline{h'} := \rho \circ h_3 : M/G \rightarrow \mathbb{P}^M(\mathbb{R})$. L'application $\overline{h'}$ est Nash et transverse \`a $\mathbb{P}^{M-1}(\mathbb{R})$, car $\rho$ est une submersion Nash (en tant que composition d'un diff\'eomorphisme Nash et de la projection d'un fibr\'e vectoriel Nash).
\\

\underline{(vi) Restriction.} Enfin, en restreignant l'homotopie \`a $N'$ et en notant $h'$ la restriction de $\overline{h'}$ \`a $N'$, on obtient une application 
$$h' : N' \rightarrow \mathbb{P}^M(\mathbb{R})$$
Nash, transverse \`a $\mathbb{P}^{M-1}(\mathbb{R})$, homotope \`a $f'$. En la composant enfin avec $\psi$ pour revenir au quotient $N/G$, on peut supposer que l'application
$$h := h' \circ \psi : N/G \rightarrow \mathbb{P}^M(\mathbb{R})$$
Nash, transverse \`a $\mathbb{P}^{M-1}(\mathbb{R})$ et homotope \`a $f = f' \circ \psi$, est l'application classifiante associ\'ee au $\mathbb{Z}/2 \mathbb{Z}$-fibr\'e principal lisse $\Pi : N \rightarrow N/G$.

\underline{(vii) Rel\`evement.} On rel\`eve alors $h$ en une application
$$\widetilde{h} : N \rightarrow \mathbb{S}^M(\mathbb{R})$$
\'equivariante (par rapport \`a l'action de $G$ sur $N$ et l'action antipodale sur $\mathbb{S}^M(\mathbb{R})$) transverse \`a $\mathbb{S}^{M-1}(\mathbb{R})$, analytique et localement semi-alg\'ebrique, donc Nash, car une application analytique localement semi-alg\'ebrique entre deux vari\'et\'es Nash, dont celle d'arriv\'ee est affine, est Nash (remarque (xv) \`a la page 16 de \cite{Shi}). 
\\

\underline{(viii) D\'ecoupage de $N$.} On note 
\begin{itemize}
	\item $N^+ := \widetilde{h}^{-1}(S^+)$, 
	\item $N^- := \widetilde{h}^{-1}(S^-) = \sigma(N^+)$, 
	\item $W := \widetilde{h}^{-1}\left(\mathbb{S}^{M-1}(\mathbb{R})\right) = N^+ \cap N^-$.
\end{itemize}
($S^+$ et $S^-$ sont respectivement les h\'emisph\`eres nord et sud de la sph\`ere $\mathbb{S}^{M}(\mathbb{R})$, \'echang\'es sous l'action antipodale).

$N^+$ et $N^-$ sont des sous-ensembles semi-alg\'ebriques ferm\'es de $N$ de m\^eme dimension que $N$, et, par transversalit\'e de l'application Nash $\widetilde{h}$ (et parce que $\mathbb{S}^{M-1}(\mathbb{R})$ est une sous-vari\'et\'e Nash de $\mathbb{S}^{M}(\mathbb{R})$), $W$ est une sous-vari\'et\'e Nash de $N$ de codimension $1$. C'est de plus un sous-ensemble $\mathcal{AS}$ de $N$, en tant qu'image r\'eciproque du sym\'etrique par arcs $\mathbb{S}^{M-1}(\mathbb{R})$ par l'application analytique semi-alg\'ebrique $\widetilde{h}$.
\\
	
On a bien \'evidemment $N^+ \cup N^- = N$ mais on a \'egalement $\partial N^+ = \partial N^- = W$. 

En effet, le bord semi-alg\'ebrique d'un ensemble semi-alg\'ebrique $A$ est tout d'abord d\'efini par 
$$\partial A = \{ x \in A ~|~\chi(A \cap S(x,\epsilon)) \equiv 1 \mbox{ mod $2$ } \}$$
pour $\epsilon$ suffisamment petit, or, la caract\'eristique d'Euler \`a supports compacts \'etant additive et $N$ et $W$ ne poss\'edant pas de bord, on a modulo $2$, pour tout $x$ dans $N$,
\begin{eqnarray*}
\	0 & \equiv & \chi\left(N \cap S \left(x,\epsilon\right)\right) \\
& \equiv &  \chi\left( N^+ \cap S\left(x,\epsilon\right)\right) + \chi\left(N^- \cap S\left(x,\epsilon\right)\right) - \chi\left( W \cap S\left(x,\epsilon\right)\right) \\
& \equiv & \chi\left( N^+ \cap S\left(x,\epsilon\right)\right) + \chi\left(N^- \cap S\left(x,\epsilon\right)\right)
\end{eqnarray*}
Un \'el\'ement $x$ de $N$ est ainsi dans le bord de $N^+$ si et seulement s'il est dans le bord de $N^-$. En particulier, $\partial N^+ = \partial N^- \subset N^+ \cap N^- = W$.

R\'eciproquement, soit $x \in W$ et supposons par l'absurde que $x \notin \partial N^+ = \partial N^-$. Le point $x$ appartient alors \`a $N^+ \setminus \partial N^+$ et $N^- \setminus \partial N^-$, qui sont des ouverts semi-alg\'ebriques de $N$ : il existe donc des ouverts semi-alg\'ebriques $U$ et $V$ de $N$ (de m\^eme dimension $n$ que $N$) tels que $x \in U \subset N^+ \setminus \partial N^+$, et  $x \in V \subset N^- \setminus \partial N^-$. L'ouvert $U \cap V$ de $N$, de dimension $n$ (non vide car contenant $x$), est alors inclus dans $(N^+ \setminus \partial N^+) \cap (N^- \setminus \partial N^-)$ qui est de dimension au plus $n-1$, en tant que sous-ensemble de $W$.
\\

\underline{(ix) D\'ecoupage de $M$.} Enfin, on rajoute les points fixes en consid\'erant l'adh\'erence de $N$ dans $M$, qui est $M$ tout entier car $M$ est une vari\'et\'e Nash connexe dont la sous-vari\'et\'e Nash $M^G$ est de codimension au moins $1$. On note alors 
\begin{itemize}
	\item $M^+ := cl(N^+)$,
	\item $M^- := cl(N^-) = \sigma(M^+)$.
\end{itemize}

On voit imm\'ediatement que $M^+ \cup M^- = M$. Montrons que l'on a $M^+ \cap M^- = W \cup M^G$. On prouve pour cela que $cl(N^+) \setminus N^+ = cl(N^-) \setminus N^- = W \cup M^G$.
\\

En effet, comme $cl(N^+) \setminus N^+ \subset M^G$, 
$$cl(N^+) \setminus N^+ = \sigma \left(cl(N^+) \setminus N^+ \right) = cl(N^-) \setminus N^-.$$   
De plus,
$$M = M^+ \cup M^- = (N^+ \cup N^-) \sqcup \left(\left(cl(N^+) \setminus N^+\right) \cup \left(cl(N^-) \setminus N^- \right)\right) = (M \setminus M^G) \sqcup \left(cl(N^+) \setminus N^+\right)$$
donc $cl(N^+) \setminus N^+ = M^G$. En particulier, $M^+ \cap M^- = W \cup M^G$. Remarquons que, comme $W$ est un sous-ensemble $\mathcal{AS}$ de $M \setminus M^G$, $W \cup M^G$ est un sous-ensemble $\mathcal{AS}$ de $M$, en tant qu'union de deux tels ensembles (la cat\'egorie $\mathcal{AS}$ est l'alg\`ebre bool\'eenne engendr\'e par les sous-ensembles sym\'etriques par arcs de $\mathbb{P}^N(\mathbb{R})$). Il est de plus ferm\'e dans le compact $M$ et est donc sym\'etrique par arcs.
\\

Enfin, on montre que $\partial M^+ = \partial M^- = W \cup M^G$, d'une mani\`ere identique \`a ce que l'on avait fait dans le cas de $N$.

\end{proof}

Dans la section suivante \ref{suite_exacte_smith_nash_cons}, on va, comme annonc\'e, utiliser ce r\'esultat pour pouvoir d\'ecouper toute cha\^ine d'une vari\'et\'e alg\'ebrique r\'eelle en deux morceaux suffisamment r\'eguliers. 

\section{La suite exacte courte de Smith Nash-constructible dans le cas $G = \mathbb{Z}/2 \mathbb{Z}$} \label{suite_exacte_smith_nash_cons}

Soit $X$ une vari\'et\'e alg\'ebrique r\'eelle munie d'une involution alg\'ebrique $\sigma$. Dans cette partie, on montre que l'on peut \'ecrire toute cha\^ine $c$ globalement invariante de $X$, de dimension $k$ et de degr\'e $\alpha$ dans la filtration Nash-constructible (avec action), comme la somme
$$c = c_{|X^G} + (1+ \sigma) \gamma$$
o\`u $c_{|X^G} \in \mathcal{N}_{\alpha}ÊC_k(X^G)$ est la restriction de $c$ \`a l'ensemble des points invariants de $X$ et o\`u $\gamma \in \mathcal{N}_{\alpha+1} C_k(X)$.
\\

Consid\'erant une cha\^ine $c$, notre d\'emarche va consister \`a d\'ecouper en de tels morceaux l'adh\'erence de Zariski du support de $c$, puis \`a consid\'erer les intersections du support de $c$ avec ceux-ci, qui r\'epondront alors aux conditions recherch\'ees.   

On proc\`ede ainsi en deux \'etapes. On traite tout d'abord le cas d'une cha\^ine pure, et ensuite celui d'une cha\^ine quelconque.
\\

Pour la premi\`ere \'etape, on se ram\`enera au cas o\`u le support de la cha\^ine est une vari\'et\'e Nash compacte et on utilisera le th\'eor\`eme \ref{decoup_Nash} de la section pr\'ec\'edente, afin de d\'ecouper une cha\^ine pure de $X$, invariante sous l'action de $G$, en deux cha\^ines dont le bord est pur :

\begin{prop} Soit $c \in \left(\mathcal{N}_{-k} C_k(X)\right)^G$ une cha\^ine pure de dimension $k$, invariante sous l'action de $G$. Alors il existe une cha\^ine $\gamma \in \mathcal{N}_{-k + 1} C_k(X)$ telle que
$$c = c_{|X^G} + (1+\sigma) \gamma$$
(la restriction $c_{|X^G}$ \`a $X^G$  appartient \`a $\mathcal{N}_{-k} C_k(X^G)$).
\end{prop}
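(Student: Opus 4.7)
Le plan est de se ramener, via une r\'esolution puis une compactification \'equivariantes, au cas o\`u le support de $c$ se rel\`eve \`a une vari\'et\'e Nash compacte munie d'une involution alg\'ebrique, puis d'appliquer le th\'eor\`eme~\ref{decoup_Nash} composante par composante et de pousser en avant le d\'ecoupage obtenu. Pr\'ecis\'ement, on \'ecrit $c = [A]$ avec $A$ semi-alg\'ebrique ferm\'e de dimension $k$ ; quitte \`a remplacer $A$ par $A \cup \sigma(A)$, ce qui ne modifie pas $c$ par $G$-invariance (modulo les sous-ensembles de dimension $<k$), on suppose $A$ globalement $\sigma$-stable. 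Soit $Z$ l'adh\'erence de Zariski de $A$ dans $X$ : c'est une sous-vari\'et\'e alg\'ebrique $\sigma$-invariante de dimension $k$. La remarque~\ref{val_crit_action} fournit alors une r\'esolution \'equivariante suivie d'une compactification \'equivariante de $Z$, donnant une $G$-vari\'et\'e compacte non singuli\`ere $\tilde Z$ et un morphisme propre r\'egulier \'equivariant $\pi : \tilde Z \to X$ birationnel sur $Z$. En notant $M = \tilde Z(\mathbb R)$ et $\tilde c := \pi^* c$, la cha\^ine $\tilde c$ est pure de dimension $k$, $\tilde \sigma$-invariante, et v\'erifie $\pi_* \tilde c = c$.

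On applique ensuite le th\'eor\`eme~\ref{decoup_Nash} composante par composante de $M = \bigsqcup_i M_i$. Si $\tilde \sigma(M_i) \neq M_i$, on choisit un repr\'esentant par $\tilde\sigma$-orbite et on pose $\gamma_i := \tilde c|_{M_i}$ pour le repr\'esentant (et $0$ pour son image) ; si $M_i \subset M^{\tilde \sigma}$, on pose $\gamma_i := 0$, la contribution \'etant absorb\'ee par $\tilde c_{|M^{\tilde \sigma}}$ ; si $\tilde \sigma(M_i) = M_i$ avec action non triviale, le th\'eor\`eme~\ref{decoup_Nash} produit un d\'ecoupage $M_i = B_i \cup \tilde \sigma(B_i)$ avec $B_i \cap \tilde \sigma(B_i) = \partial B_i = S_i$ sym\'etrique par arcs de codimension~$1$, et on pose $\gamma_i := \tilde c|_{B_i}$. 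Posant $\tilde \gamma := \sum_i \gamma_i$ puis $\gamma := \pi_* \tilde \gamma$, la relation $(1+\tilde \sigma)\tilde \gamma = \tilde c - \tilde c_{|M^{\tilde \sigma}}$ dans $C_k(\tilde Z)$ d\'ecoule, dans le cas non trivial, de $\overline{B_i \triangle \tilde \sigma(B_i)} = M_i$ et $\dim S_i < k$, et d'un calcul analogue imm\'ediat pour les orbites \'echang\'ees. L'\'equivariance du pouss\'e en avant et sa commutation avec la restriction (proposition~\ref{prop_res,adh_action}) donnent alors l'\'egalit\'e $c = c_{|X^G} + (1+\sigma)\gamma$ voulue.

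L'obstacle principal est le contr\^ole du degr\'e de filtration, \`a savoir que $\gamma \in \mathcal{N}_{-k+1} C_k(X)$. Par fonctorialit\'e \'equivariante de la filtration Nash-constructible sous les morphismes propres r\'eguliers (remarque~\ref{rem_fil_geom_act}), il suffit de montrer que $\tilde \gamma \in \mathcal{N}_{-k+1} C_k(\tilde Z)$. Pour les composantes $\tilde \sigma$-\'echang\'ees, $\gamma_i = \tilde c|_{M_i}$ est pure, donc m\^eme dans $\mathcal{N}_{-k} \subset \mathcal{N}_{-k+1}$. Pour les composantes \`a action non triviale d\'ecoup\'ees par le th\'eor\`eme~\ref{decoup_Nash}, $\gamma_i = \tilde c|_{B_i}$ est une cha\^ine dont le bord semi-alg\'ebrique est port\'e par le sous-ensemble sym\'etrique par arcs $S_i$ de dimension $k-1$ ; la caract\'erisation de la filtration Nash-constructible au niveau des cha\^ines (\cite{MCP} section~3) garantit qu'une telle cha\^ine, d\'ecoup\'ee le long d'un sym\'etrique par arcs, appartient \`a $\mathcal{N}_{-k+1} C_k(\tilde Z)$, ce qui conclut.
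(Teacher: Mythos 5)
Your proof is correct and follows essentially the same route as the paper's: reduce by equivariant compactification and resolution to the case where the support of the pure chain is a union of connected components of a compact nonsingular variety, cut the components with non-trivial action using Theorem~\ref{decoup_Nash}, and control the filtration degree via the purity of the boundary $[S_i]=\partial[B_i]$ (the equivalence used in the proof of \cite{MCP} Corollary 3.12 on a compact nonsingular variety of dimension $k$), before pushing forward. The only differences are presentational — you make explicit the treatment of exchanged and pointwise-fixed components, which the paper disposes of with a "sans perdre de g\'en\'eralit\'e", and you order the reductions slightly differently.
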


\begin{proof} Notons tout d'abord que, $c$ \'etant une cha\^ine invariante sous l'action de $G$, le support de $c$ est \'egalement globalement invariant. En effet, si $A$ est un sous-ensemble semi-alg\'ebrique de $X$ repr\'esentant la cha\^ine $c$, c'est \'egalement le cas pour $\sigma(A)$ et, comme l'involution $\sigma$ est un isomorphisme alg\'ebrique, on a
$$\Supp~c = \{y \in \sigma(A)~|~\dim_y~ \sigma(A) = k \} = \sigma \left(\{ x \in A~|~\dim_x A = k\} \right) = \sigma( \Supp~c).$$

On va alors se ramener au cas o\`u le support de $c$, que l'on note $M$, est une sous-vari\'et\'e Nash compacte connexe d'un espace affine.
\\

(1) On peut supposer que la $G$-vari\'et\'e alg\'ebrique r\'eelle $X$ est compacte~: si ce n'est pas le cas, on consid\`ere une compactification \'equivariante $X_0$ de $X$ (\cite{DL} Appendix), et l'adh\'erence $\overline{c} \in \left(\mathcal{N}_{-k} C_k(X_0)\right)^G$ de $c$ dans $X_0$. Si l'on montre alors que, dans ces conditions, il existe $\gamma_0 \in \mathcal{N}_{-k+1} C_k(X_0)$ tel que $\overline{c} = \overline{c}_{|X_0^G} + (1+\sigma) \gamma_0$, on obtient par restriction
$$c = \overline{c}_{|X} = c_{|X^G} + (1+\sigma) {\gamma_0}_{|X}$$
avec ${\gamma_0}_{|X} \in \mathcal{N}_{-k+1} C_k(X).$
\\

(2) On suppose ensuite que $X$ est l'adh\'erence de Zariski $\overline{M}^{\mathcal{Z}}$ de $M$ et qu'en particulier $\dim X = k$. En effet, si l'on note $Z := \overline{M}^{\mathcal{Z}}$, alors $Z$ est un ferm\'e de $X$ globalement invariant sous l'action de $G$ (car $M$ l'est) et, comme par d\'efinition $\Supp~c \subset Z$, on a $c \in \mathcal{N}_{-k} C_k(Z)$.
\\

(3) On peut aussi supposer que la vari\'et\'e $X$ est non-singuli\`ere, en consid\'erant une r\'esolution \'equivariante $\pi : \widetilde{X} \rightarrow X$ des singularit\'es de $X$ (\cite{Hiro}, \cite{DL} Appendix) et le tir\'e en arri\`ere $\widetilde{c}:= \pi^{-1} c \in \left(\mathcal{N}_{-k} C_k(\widetilde{X})\right)^G$ de $c$. Alors, si l'on montre que dans ces conditions, $\widetilde{c} = \widetilde{c}_{|\widetilde{X}^G} + (1+ \sigma) \widetilde{\gamma}$ avec $\widetilde{\gamma} \in \mathcal{N}_{-k+1} C_k(\widetilde{X})$, on obtient, en poussant en avant,
$$c = \pi_*(c) = c_{|X^G} + (1+ \sigma) \pi_*(\widetilde{\gamma})$$ 
(avec $\pi_*(\widetilde{\gamma}) \in  \mathcal{N}_{-k+1} C_k(X)$), car $\pi$ est un isomorphisme en dehors d'une sous-vari\'et\'e de codimension au moins $1$ et $c$ est de dimension $k = \dim X$).
\\

Au total, la cha\^ine $c$ \'etant pure, on peut supposer que le support $M$ de $c$ est un sous-ensemble sym\'etrique par arcs (dont toutes les composantes connexes sont) de dimension maximale d'une vari\'et\'e compacte non-singuli\`ere $X$, i.e. une union de composantes connexes de dimension maximale de $X$ (\cite{KP}).
\\

(4) Enfin, l'involution $\sigma$ \'echange ou fixe globalement les cartes affines recouvrant $X$, ainsi que les composantes connexes de $X$, et donc les restrictions de $M$ \`a celles-ci, qui constituent \'egalement des cha\^ines pures de $X$. On peut donc supposer sans perdre de g\'en\'eralit\'e que $M$ est une composante connexe d'un sous-ensemble alg\'ebrique compact non-singulier d'un espace affine, globalement stable sous l'action (non triviale) de $\sigma$, qui peut \^etre consid\'er\'ee comme une sous-vari\'et\'e Nash compacte connexe d'un espace affine.
\\

On utilise alors le th\'eor\`eme \ref{decoup_Nash} pour \'ecrire
$$[M] = (1 + \sigma) [A]$$
dans $C_k(X)$ (l'ensemble des points invariants $M^G$ est de codimension au moins $1$ comme l'action de $G$ n'est pas triviale, donc $[M^G] = 0$ dans $C_k(X)$), avec $[A] \in C_k(X)$ v\'erifiant $\partial [A] = [\partial(A)] \in \mathcal{N}_{-k+1} C_{k-1}(X)$, i.e. $[A] \in \mathcal{N}_{-k+1} C_{k}(X)$ (on utilise l'une des \'equivalences de la preuve de \cite{MCP} Corollary 3.12 : $X$ est compacte non-singuli\`ere de dimension $k$).

\end{proof}

On va alors utiliser le d\'ecoupage des cha\^ines pures pour d\'ecouper les cha\^ines de degr\'e plus \'elev\'e dans la filtration. Pr\'ecis\'ement, pour couper en deux une cha\^ine invariante sous l'action de $G$, de fa\c{c}on \`a contr\^oler la ``r\'egularit\'e'' des parties que l'on obtient, il suffit de d\'ecouper l'adh\'erence de Zariski de son support en utilisant la propri\'et\'e pr\'ec\'edente : 

\begin{prop} \label{decoup_fin}
Soit $c \in (\mathcal{N}_{\alpha} C_k(X))^G$ une $k$-cha\^ine quelconque de $X$, invariante sous l'action de l'involution $\sigma$. Alors on peut \'ecrire
$$c = c_{|X^G} + (1 + \sigma) c'$$
avec $c_{|X^G} \in \mathcal{N}_{\alpha} C_k(X^G)$ et $c' \in \mathcal{N}_{\alpha + 1} C_k(X)$
\end{prop}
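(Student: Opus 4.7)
La stratégie, annoncée juste avant l'énoncé, consiste à découper l'adhérence de Zariski $Z := \overline{A}^{\mathcal{Z}}$ du support d'un représentant $A$ de $c$ au moyen du cas pur (proposition précédente), puis à intersecter $c$ avec les morceaux obtenus. Commençons par des réductions immédiates : quitte à remplacer un représentant $A$ de $c$ par $A \cup \sigma(A)$ (qui ne diffère de $A$ que d'un ensemble de dimension strictement inférieure à $k$, car $[A] = [\sigma(A)]$ par invariance de $c$), on peut supposer $A$ globalement $\sigma$-stable. Alors $Z$ est une sous-variété algébrique fermée de $X$, globalement $\sigma$-invariante, de dimension au plus $k$; en écartant le cas trivial $c=0$, on suppose $\dim Z = k$, de sorte que $[Z] \in (\mathcal{N}_{-k} C_k(Z))^G$ est une chaîne pure invariante.

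On applique alors la proposition précédente à $[Z]$ vue dans la $G$-variété $Z$ : elle fournit un sous-ensemble semi-algébrique fermé $B \subset Z$ tel que, dans $C_k(X)$,
\[
[Z] = [Z^G] + (1+\sigma)[B], \qquad [B] \in \mathcal{N}_{-k+1} C_k(Z) \subset \mathcal{N}_{-k+1} C_k(X),
\]
où $Z^G := Z \cap X^G$. On pose alors $c' := c \cap [B] := [A \cap B]$, bien définie sur les chaînes par le lemme \ref{lem_1_2_3}. L'opération de produit $d \mapsto d \cap [E]$ est $\mathbb{Z}_2$-bilinéaire sur les chaînes (conséquence directe des lemmes \ref{lem_1_2_3} et \ref{lem_1_2_cl}) et équivariante (elle s'écrit $\sigma([A \cap E]) = [\sigma(A) \cap \sigma(E)]$). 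Comme $A \subset Z$, on a $c \cap [Z] = c$; en intersectant l'égalité ci-dessus par $c$, il vient
\begin{align*}
c = c \cap [Z] &= c \cap [Z^G] + c \cap [B] + c \cap [\sigma(B)] \\
&= c_{|X^G} + c' + \sigma(c'),
\end{align*}
soit $c = c_{|X^G} + (1+\sigma)c'$ (le premier terme vaut $c_{|X^G}$ puisque $A \cap Z^G = A \cap X^G$).

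Il reste à contrôler les degrés de filtration. L'appartenance $c_{|X^G} \in \mathcal{N}_{\alpha} C_k(X^G)$ découle de la compatibilité de la filtration Nash-constructible avec la restriction à une sous-variété fermée $G$-stable (remarque \ref{rem_fil_geom_act} et \cite{MCP}, section 3). Le point délicat, qui constitue l'obstacle principal, est de montrer que $c' \in \mathcal{N}_{\alpha + 1} C_k(X)$ : intuitivement, intersecter une chaîne de $\mathcal{N}_\alpha$ avec $[B] \in \mathcal{N}_{-k+1}$ ne coûte qu'un seul cran de régularité, reflétant exactement celui perdu dans le cas pur lors du passage de $[Z]$ à $[B]$. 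Pour l'établir rigoureusement, on peut soit invoquer une compatibilité multiplicative de la filtration Nash-constructible relative au produit (intersection) de chaînes, via la description par les fonctions Nash-constructibles et l'opérateur d'entrelacs (\cite{MCP}, section 3); soit, après réductions équivariantes analogues à celles du cas pur (compactification et résolution $G$-équivariantes), se ramener au cas où $X$ est une variété compacte non singulière de dimension $k$ et appliquer alors la caractérisation par le bord du corollaire 3.12 de \cite{MCP} à $\partial c'$, où le bord pur $\partial[B]$ issu de la décomposition de Nash joue un rôle central.
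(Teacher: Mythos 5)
Votre strat\'egie est celle du papier : d\'ecouper l'adh\'erence de Zariski du support au moyen du cas pur, puis intersecter avec la cha\^ine de d\'epart, le calcul $(1+\sigma)c' = c + c_{|X^G}$ reposant sur les lemmes \ref{lem_1_2_3} et \ref{lem_1_2_cl} exactement comme dans le texte. Mais le point que vous signalez vous-m\^eme comme d\'elicat --- montrer que $c' \in \mathcal{N}_{\alpha+1} C_k(X)$ --- n'est pas d\'emontr\'e : vous esquissez deux pistes sans en mener aucune \`a terme, alors que c'est pr\'ecis\'ement l\`a que r\'eside tout le contenu de la proposition (le reste n'est que de la combinatoire de cha\^ines). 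L'argument du papier est court et concret : on repr\'esente $c$ et $\gamma$ par les supports de fonctions g\'en\'eriquement Nash-constructibles en dimension $k$, \`a savoir $\varphi : X \to 2^{k+\alpha}\mathbb{Z}$ avec $c = [B]$, $B = \{x~|~\varphi(x) \notin 2^{k+\alpha+1}\mathbb{Z}\}$, et $\psi : X \to 2\mathbb{Z}$ avec $\gamma = [S]$, $S = \{x~|~\psi(x) \notin 2^{2}\mathbb{Z}\}$ ; le produit $\varphi\times\psi$ est alors \`a valeurs dans $2^{k+\alpha+1}\mathbb{Z}$, g\'en\'eriquement Nash-constructible en dimension $k$ comme produit de telles fonctions, et son support modulo $2^{k+\alpha+2}$ est exactement $B\cap S$ (en \'ecrivant $\varphi = 2^{k+\alpha}a$ et $\psi = 2b$, le produit $ab$ est impair si et seulement si $a$ et $b$ le sont). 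Ainsi $c' := [B\cap S]$ appartient \`a $\mathcal{N}_{\alpha+1}C_k(X)$ par d\'efinition m\^eme de la filtration. Votre premi\`ere piste (compatibilit\'e multiplicative de la filtration avec l'intersection) \'etait donc la bonne, mais il fallait l'\'ecrire : c'est l'\'etape centrale de la preuve, pas un point de routine \`a d\'el\'eguer.

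Votre seconde piste, en revanche, ne peut pas aboutir telle quelle : la caract\'erisation du degr\'e par la puret\'e du bord (\cite{MCP} Corollary 3.12) ne concerne que le niveau $\mathcal{N}_{-k+1}$ des cha\^ines de dimension maximale sur une vari\'et\'e compacte non singuli\`ere, et ne dit rien du degr\'e $\alpha+1$ d'une cha\^ine quelconque. De m\^eme, les r\'eductions \'equivariantes (compactification, r\'esolution) utilis\'ees dans le cas pur exploitent que la cha\^ine y est l'adh\'erence de Zariski tout enti\`ere ; pour une cha\^ine g\'en\'erale de degr\'e $\alpha$, rien de tel n'est disponible, et c'est justement pour cela que le papier travaille directement avec les fonctions Nash-constructibles.
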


\begin{proof}  On note $c^{\mathcal{Z}}$ la cha\^ine de $C_k(X)$ repr\'esent\'ee par l'adh\'erence de Zariski du support not\'e $A$ de la cha\^ine $c$. On a alors $c^{\mathcal{Z}} \in \left(\mathcal{N}_{-k} C_k(X)\right)^G$ et, d'apr\`es la propri\'et\'e pr\'ec\'edente, il existe $\gamma \in \mathcal{N}_{-k+1} C_k(X)$ tel que
$$c^{\mathcal{Z}} = \left(c^{\mathcal{Z}}\right)_{|X^G} + (1 + \sigma) \gamma.$$

Revenons \`a la d\'efinition de la filtration Nash-constructible (\cite{MCP} section 3) : soit donc une fonction $\psi : X \rightarrow~2\mathbb{Z}$ g\'en\'eriquement Nash-constructible en dimension $k$ telle que $\gamma = [S]$ avec
$$S = \{x \in X~|~\psi (x) \notin 2^{2} \mathbb{Z} \},$$
et soit une fonction $\varphi : X \rightarrow 2^{k+\alpha} \mathbb{Z}$ g\'en\'eriquement Nash-constructible en dimension $k$ telle que $c = [B]$ avec
$$B = \{x \in X~|~\varphi (x) \notin 2^{k + \alpha +1} \mathbb{Z} \}.$$

On consid\`ere alors le produit $\varphi \times \psi : X \rightarrow 2^{k + \alpha + 1} \mathbb{Z}$ qui est une fonction g\'en\'eriquement Nash constructible en dimension $k$ (comme produit de telles fonctions). On a
\begin{eqnarray*}
\ B \cap S & = & \{x \in X~|~\varphi(x) \notin 2^{k + \alpha +1} \mathbb{Z} \mbox{ et }   \psi (x) \notin 2^{2} \mathbb{Z} \} \\
& = & \{x \in X~|~\varphi \times \psi(x) \notin 2^{k + \alpha + 2} \mathbb{Z} \}.
\end{eqnarray*}

On note $c'$ la cha\^ine repr\'esent\'ee par $B \cap S$, qui appartient donc \`a $\mathcal{N}_{\alpha + 1} C_k(X)$, et on a
\begin{eqnarray*}
\ (1+ \sigma) c' & = & [B \cap S] + [\sigma(B) \cap \sigma(S)] \\
			& = & [B \cap S] + [B \cap \sigma(S)] \text{ (lemme \ref{lem_1_2_3})} \\
			& = & [cl\left( B \cap \left(S \div \sigma(S)\right)\right)]\\
			& = &  [B \cap cl\left(S \div \sigma(S)\right)] \text{ (lemme \ref{lem_1_2_cl})},
\end{eqnarray*}
or $[B] = [A]$ et
$$[cl\left(S \div \sigma(S)\right)] = (1 + \sigma) [S] = c^{\mathcal{Z}} + \left(c^{\mathcal{Z}}\right)_{|X^G} = \left[cl\left(\overline{A}^{\mathcal{Z}} \setminus \overline{A}^{\mathcal{Z}} \cap X^G\right)\right],$$
donc
\begin{eqnarray*}
\ (1 + \sigma) c' & = & \left[A \cap cl \left(\overline{A}^{\mathcal{Z}} \setminus \overline{A}^{\mathcal{Z}} \cap X^G\right) \right] \text{ (lemme \ref{lem_1_2_3})}\\
& = & \left[cl\left(A \cap \left(\overline{A}^{\mathcal{Z}} \setminus \overline{A}^{\mathcal{Z}} \cap X^G\right) \right)\right] \text{ (lemme \ref{lem_1_2_cl})}\\
& = & \left[cl\left(A \setminus A \cap X^G\right)\right] \\
& = & c + c_{|X^G}.
\end{eqnarray*}

\end{proof}

On r\'esume cette propri\'et\'e de d\'ecoupage des cha\^ines invariantes par une suite exacte courte qui rappelle la suite exacte courte de Smith, que l'on adapte ici aux contraintes de la filtration Nash-constructible. On d\'efinit pour cela une nouvelle filtration avant de donner la suite exacte courte que l'on nommera Smith Nash-constructible :

\begin{de} Pour tout $k$ et tout $\alpha$, on note
$$T^{\alpha + 1}_k(X) := \{c \in \mathcal{N}_{\alpha+1} C_k(X)~|~(1 + \sigma) c \in \mathcal{N}_{\alpha} C_k(X) \}.$$
\end{de}

\begin{rem} Pour tout $\alpha$, $T^{\alpha+1}_*(X)$, muni de la diff\'erentielle induite par celle de $C_*(X)$, est un complexe de cha\^ines. On obtient ainsi une nouvelle filtration
$$0 = T^{-k-1}_k(X) \subset T^{-k}_k(X) \subset \cdots \subset T^{1}_k(X) = C_k(X)$$
de $C_*(X)$.
\end{rem}

\begin{theo} \label{suite_ex_smith_nash} Pour tout $\alpha$, la suite de complexes
$$0 \rightarrow \mathcal{N}_{\alpha} C_*(X^G) \oplus (1+ \sigma) T^{\alpha + 1}_*(X) \rightarrow \mathcal{N}_{\alpha} C_*(X) \rightarrow (1 + \sigma) \mathcal{N}_{\alpha} C_*(X) \rightarrow 0,$$
est exacte. 

On l'appelle suite exacte courte de Smith Nash-constructible de degr\'e $\alpha$.
\end{theo}

On peut interpr\'eter cette suite exacte dans le cas o\`u $X$ est une vari\'et\'e compacte munie d'une action libre de $G$. En effet, dans ces conditions, on a un isomorphisme filtr\'e entre les cha\^ines invariantes et les cha\^ines du quotient~:

\begin{prop} \label{fil_Nash_quotient} Supposons que la $G$-vari\'et\'e alg\'ebrique r\'eelle $X$ soit compacte, et que l'action de $G$ sur $X$ soit libre. Alors le quotient $X/G$ (qui d\'esigne par abus de notation le quotient de l'ensemble des points r\'eels de $X$ par l'action de $G$ restreinte) est un ensemble sym\'etrique par arcs (\cite{GF} Proposition 3.15) et, pour tous $k, \alpha \in \mathbb{Z}$, on a un isomorphisme 
$$\left(\mathcal{N}_{\alpha} C_k(X)\right)^G \cong (1+ \sigma) T^{\alpha + 1}_k(X) \cong \mathcal{N}_{\alpha} C_k\left(X/G\right).$$
\end{prop}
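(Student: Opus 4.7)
La stratégie est d'établir les deux isomorphismes séparément. Le premier, $(1+\sigma) T^{\alpha+1}_k(X) \cong (\mathcal{N}_\alpha C_k(X))^G$, est en fait une égalité de sous-modules de $\mathcal{N}_\alpha C_k(X)$, et résulte directement de la proposition \ref{decoup_fin} : l'action de $G$ étant libre, $X^G = \emptyset$, et toute chaîne invariante $c \in (\mathcal{N}_\alpha C_k(X))^G$ s'écrit donc $c = (1+\sigma) c'$ avec $c' \in \mathcal{N}_{\alpha+1} C_k(X)$ ; un tel $c'$ satisfait $(1+\sigma) c' = c \in \mathcal{N}_\alpha C_k(X)$ et appartient donc à $T^{\alpha+1}_k(X)$. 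L'inclusion réciproque est immédiate, par définition de $T^{\alpha+1}_k(X)$ et par $G$-invariance de $(1+\sigma) c'$.

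Pour le second isomorphisme $(\mathcal{N}_\alpha C_k(X))^G \cong \mathcal{N}_\alpha C_k(X/G)$, je commencerais par construire un isomorphisme abstrait $(C_k(X))^G \cong C_k(X/G)$ au niveau des chaînes. La projection quotient $\pi : X \rightarrow X/G$ est propre et, l'action étant libre, c'est un revêtement topologique à deux feuillets ; les sous-ensembles semi-algébriques fermés invariants de $X$ sont ainsi en bijection avec les sous-ensembles semi-algébriques fermés de $X/G$ via $B \mapsto \pi(B)$, d'inverse $A \mapsto \pi^{-1}(A)$. La liberté de l'action garantit que $\pi(B_1 \div B_2) = \pi(B_1) \div \pi(B_2)$ lorsque les $B_i$ sont invariants, et $\pi$, $\pi^{-1}$ préservent la dimension ; cette bijection descend donc en isomorphismes réciproques entre $(C_k(X))^G$ et $C_k(X/G)$.

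Il reste alors à vérifier que ces isomorphismes envoient $(\mathcal{N}_\alpha C_k(X))^G$ exactement sur $\mathcal{N}_\alpha C_k(X/G)$. Un sens est direct : une fonction $\tilde\varphi : X/G \rightarrow 2^{k+\alpha} \mathbb{Z}$ génériquement Nash-constructible en dimension $k$, représentant $[A] \in \mathcal{N}_\alpha C_k(X/G)$, se tire en arrière par le revêtement Nash $\pi$ en une fonction invariante, encore génériquement Nash-constructible en dimension $k$, qui représente $[\pi^{-1}(A)]$. L'obstacle principal sera la réciproque : partant d'une chaîne invariante $[B] \in (\mathcal{N}_\alpha C_k(X))^G$ représentée par une fonction $\varphi : X \rightarrow 2^{k+\alpha}\mathbb{Z}$ génériquement Nash-constructible en dimension $k$, il s'agit de produire un représentant invariant $\varphi'$ (par exemple en symétrisant $\varphi$ modulo des ensembles de dimension strictement inférieure à $k$) qui descende en une fonction Nash-constructible $\tilde\varphi$ sur $X/G$. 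L'argument-clé sera que le revêtement Nash propre $\pi$ induit un poussé en avant sur les fonctions Nash-constructibles avec $\pi_* \varphi' = 2 \tilde\varphi$ pour $\varphi'$ invariante, et que la division par $2$ préserve la Nash-constructibilité à valeurs entières, $X/G$ étant un ensemble symétrique par arcs par \cite{GF} Proposition 3.15.
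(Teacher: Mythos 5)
Votre premi\`ere \'etape (l'\'egalit\'e $(\mathcal{N}_\alpha C_k(X))^G = (1+\sigma)\,T^{\alpha+1}_k(X)$ d\'eduite de la proposition \ref{decoup_fin} avec $X^G = \emptyset$) est correcte et co\"incide avec l'argument du texte, qui invoque la suite exacte de Smith Nash-constructible. La bijection ensembliste $B \mapsto \pi(B)$ entre parties ferm\'ees invariantes et parties ferm\'ees du quotient, ainsi que le sens \og tir\'e en arri\`ere \fg{} $\mathcal{N}_\alpha C_k(X/G) \rightarrow (\mathcal{N}_\alpha C_k(X))^G$, sont \'egalement corrects. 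Le probl\`eme se situe exactement l\`a o\`u vous le localisez, mais l'\og argument-cl\'e \fg{} que vous proposez pour le r\'esoudre est faux : la division par $2$ ne pr\'eserve pas la Nash-constructibilit\'e. D'apr\`es le crit\`ere de l'\'eventail (\cite{MCP} Theorems 4.9 et 4.10, d'apr\`es \cite{Bon}), une fonction constructible $f$ est g\'en\'eriquement Nash-constructible si et seulement si $\sum_{s\in F} f(s) \equiv 0~\modulo~|F|$ pour tout \'eventail $F$ ; si $2f$ v\'erifie cette condition, on n'en d\'eduit que $\sum_{s \in F} f(s) \equiv 0~\modulo~|F|/2$, soit la perte d'une puissance de $2$ -- exactement la diff\'erence entre $\mathcal{N}_\alpha$ et $\mathcal{N}_{\alpha+1}$. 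Tel quel, votre argument ne donne donc que $[\pi(B)] \in \mathcal{N}_{\alpha+1} C_k(X/G)$. De plus, la \og sym\'etrisation \fg{} d'un repr\'esentant $\varphi$ n'est pas disponible dans cette cat\'egorie : $\varphi + \varphi\circ\sigma$ a le mauvais support (il repr\'esente $(1+\sigma)c = 0$), tandis que $\frac{1}{2}(\varphi+\varphi\circ\sigma)$ ou $2^{-(k+\alpha)}\varphi\cdot(\varphi\circ\sigma)$ r\'eintroduisent la m\^eme division probl\'ematique ; l'existence d'un repr\'esentant invariant g\'en\'eriquement Nash-constructible divisible par $2^{k+\alpha}$ n'est donc pas acquise.

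Le texte contourne ces deux obstacles en ne travaillant jamais avec un repr\'esentant invariant : il applique $\pi_*$ aux \og demi-cha\^ines \fg{} $\gamma \in T^{\alpha+1}_k(X)$ (non invariantes), montre que $\pi_*$ et $1+\sigma$ ont le m\^eme noyau $(T^{\alpha+1}_k(X))^G$ sur $T^{\alpha+1}_k(X)$, puis \'etablit $\pi_*(T^{\alpha+1}_k(X)) = \mathcal{N}_\alpha C_k(X/G)$ en appliquant le crit\`ere de l'\'eventail \`a la fonction $\overline{x} \mapsto \frac{\psi(x)+\psi(\sigma(x))}{2}$ : un \'eventail de l'adh\'erence de Zariski de $X/G$ se rel\`eve en un \'eventail double de $X$, et la divisibilit\'e requise des sommes provient de ce que $(1+\sigma)c$ admet, par d\'efinition de $T^{\alpha+1}_k(X)$, un repr\'esentant Nash-constructible $\varphi$ \`a valeurs dans $2^{k+\alpha}\mathbb{Z}$ avec $\varphi \equiv \frac{\psi+\psi\circ\sigma}{2}~\modulo~2^{k+\alpha+1}$. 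C'est cette hypoth\`ese structurelle, et non un principe g\'en\'eral de division par $2$, qui fournit la puissance de $2$ manquante. Pour r\'eparer votre preuve, il faudrait soit reproduire ce rel\`evement d'\'eventails en supposant acquis un repr\'esentant invariant (l'invariance compense alors effectivement le facteur $2$, mais il reste \`a construire ce repr\'esentant), soit adopter la factorisation du texte par $T^{\alpha+1}_k(X)$.
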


\begin{proof} Fixons $k$ et $\alpha$. Notons tout d'abord que la suite exacte de Smith Nash-construc-tible nous donne le premier isomorphisme, \'etant donn\'e que l'action de $G$ sur $X$ est libre. 

Pour montrer le second isomorphisme, on consid\`ere d'une part le morphisme
$$1 + \sigma : T^{\alpha + 1}_k(X) \rightarrow T^{\alpha + 1}_k(X)$$
dont le noyau est $\left(T^{\alpha + 1}_k(X)\right)^G$ et l'image est $(1 + \sigma) T^{\alpha + 1}_k(X)$, et d'autre part le morphisme
$$T^{\alpha+1}_k(X) \xrightarrow{\pi_*} \mathcal{N}_{\alpha + 1} C_k\left(X/G\right),$$
obtenu par restriction \`a $T^{\alpha+1}_k(X)$ du morphisme $\mathcal{N}_{\alpha+1} C_k(X) \rightarrow \mathcal{N}_{\alpha + 1} C_k(X/G)$, induit par l'application quotient $X \rightarrow X/G$ (qui est une application propre continue de graphe $\mathcal{AS}$).

On montre par la suite que le noyau de $\pi_*$ est $\left(T^{\alpha +1}_k(X)\right)^G$ et que son image est $\mathcal{N}_{\alpha} C_k\left(X/G\right)$. Ainsi, les morphismes $1+ \sigma$ et $\pi_*$ poss\'edant les m\^emes noyaux (et les m\^emes espaces de d\'epart), on obtient un isomorphisme entre leurs images, soit
$$(1+ \sigma) T^{\alpha + 1}_k(X) \cong \mathcal{N}_{\alpha} C_k\left(X/G\right).$$
\\

Montrons donc dans un premier temps que le noyau de $\pi_*$ consiste en les cha\^ines invariantes de $T^{\alpha +1}_k(X)$ (qui sont les exactement les cha\^ines invariantes de $\mathcal{N}_{\alpha+1} C_k(X)$).
\\

Soit $c \in T_{\alpha +1}$ tel que $\pi_* c = 0$. Si $c = \Supp~\varphi_{c,\alpha}~\modulo~2^{k+\alpha+2}$ avec $\varphi_{c,\alpha} : X \rightarrow 2^{k+ \alpha +1} \mathbb{Z}$ g\'en\'eriquement Nash-constructible en dimension $k$, alors
$$\pi_* c = \Supp~\pi_*(\varphi_{c,\alpha})~\modulo~2^{k+\alpha+2}$$
o\`u $\pi_*(\varphi_{c,\alpha}) : X/G \rightarrow 2^{k+ \alpha +1} \mathbb{Z}$ est une fonction g\'en\'eriquement Nash-constructible en dimension $k$ sur $X/G$ (\cite{MCP} Corollary 3.5).

Or, pour toute fonction Nash-constructible $f$ sur $X$, comme l'action de $\sigma$ sur $X$ est libre, $\pi_*(f)(\overline{x}) = f(x) + f(\sigma(x))$ pour tout point $\overline{x} \in X/G$. Ainsi,
$$ \pi_*c = \{ \overline{x} \in X/G~|~\varphi(x) + \varphi(\sigma(x)) \notin 2^{k+ \alpha +2} \mathbb{Z} \},$$
avec $\varphi := \varphi_{c,\alpha}$.

Par hypoth\`ese cette cha\^ine est nulle, i.e. pour tout $\overline{x}$ en dehors d'un sous-ensemble semi-alg\'ebrique de $X/G$ de dimension $<k$ (i.e. pour tout $x$ en dehors d'un sous-ensemble semi-alg\'ebrique de $X$ de dimension $<k$), $\varphi(x) + \varphi(\sigma(x)) \in 2^{k+\alpha+2} \mathbb{Z}$. En particulier, en dehors d'un sous-ensemble semi-alg\'ebrique de codimension au moins $1$, tout point du repr\'esentant $\{x \in X~|~\varphi(x) \notin 2^{k+\alpha +2} \mathbb{Z} \}$ de la cha\^ine $c$ v\'erifie \`a la fois $\varphi(x) \notin 2^{k+ \alpha+ 2} \mathbb{Z}$ (par d\'efinition) et $\varphi(x) + \varphi(\sigma(x)) \in 2^{k+\alpha+2} \mathbb{Z}$, et donc \'egalement $\varphi(\sigma(x)) \notin 2^{k+\alpha+2} \mathbb{Z}$. Or $\varphi \circ \sigma = \sigma^*\left( \varphi \right)$ repr\'esente la cha\^ine $\sigma c$, et la cha\^ine $c$ est donc invariante sous l'action de $\sigma$. 
\\

On a donc $ker~\pi_* \subset (T^{\alpha+1}_k(X))^G$. R\'eciproquement, si $c \in (T^{\alpha+1}_k(X))^G$, alors, en reprenant les notations ci-dessus, $\varphi(x) \notin 2^{k+\alpha+2} \mathbb{Z}$ et $\varphi(\sigma(x)) \notin 2^{k+\alpha+2} \mathbb{Z}$ pour tout $x$ dans le support de~$c$, g\'en\'eriquement en dimension $k$, et donc $\varphi(x) + \varphi(\sigma(x)) \in 2^{k+\alpha+2} \mathbb{Z}$. Ainsi, $\pi_* c = 0$
\\

Montrons maintenant que l'image de $\pi_*$ est constitu\'ee des cha\^ines de degr\'e $\alpha$ du quotient~$X/G$.
\\

Soit $c \in \mathcal{N}_{\alpha} C_k(X/G)$ et soit $\varphi : X/G \rightarrow 2^{k+\alpha} \mathbb{Z}$ g\'en\'eriquement Nash-constructible en dimension $k$ telle que
$$c = \{\overline{x} \in X/G~|~\varphi(\overline{x}) \notin 2^{k+\alpha+1} \mathbb{Z} \}.$$
On consid\`ere la cha\^ine, que l'on note $\pi^* c$, repr\'esent\'ee par l'ensemble semi-alg\'ebrique de dimension $k$
$$\{x \in X~|~\pi^*(\varphi) (x) \notin 2^{k + \alpha +1} \mathbb{Z} \} = \pi^{-1}\left(\{\overline{x} \in X/G~|~\varphi(\overline{x}) \notin 2^{k+\alpha+1} \mathbb{Z} \} \right)$$
o\`u le tir\'e en arri\`ere $\pi^*(\varphi) : X \rightarrow 2^{k + \alpha}Ê\mathbb{Z}$ (\cite{MCP} Corollary 3.5) est une fonction g\'en\'eriquement Nash-constructible en dimension~$k$ telle que, pour $x \in X$ en dehors d'un sous-ensemble semi-alg\'ebrique de dimension $<k$, $\pi^*(\varphi)(x) = \varphi(\pi(x)) = \varphi(\overline{x})$.

La cha\^ine $\pi^*c$ appartient donc \`a $\mathcal{N}_{\alpha} C_k(X)$ et est de plus invariante sous l'action de $G$. En suivant le raisonnement qui nous a men\'es \`a l'exactitude de la suite de Smith Nash-constructible, on sait que l'on peut \'ecrire $\pi^* c = (1 + \sigma) \gamma$ avec 
$$\gamma = \{x \in X~|~\psi(x) \notin 2^2 \mathbb{Z} \mbox{ et } \pi^*(\varphi) (x) \notin 2^{k + \alpha +1} \mathbb{Z} \}$$ 
o\`u $\psi : X \rightarrow 2 \mathbb{Z}$ est une fonction g\'en\'eriquement Nash-constructible en dimension $k$, dont le support modulo $2^2$ repr\'esente l'une des deux parties, images l'une de l'autre par $\sigma$, que l'on a obtenues par d\'ecoupage de la cha\^ine repr\'esent\'ee par l'adh\'erence de Zariski du support de $\pi^* c$.

On applique alors $\pi_*$ \`a $\gamma$:
\begin{eqnarray*}
\ \pi_* \gamma & = & \{\overline{x} \in X/G~|~\pi_*\left(\psi \times \pi^*(\varphi)\right) (\overline{x}) \notin 2^{k+ \alpha+2} \mathbb{Z} \} \\
& = & \{\overline{x} \in X/G~|~  \varphi(\overline{x}) \left(\psi(x) + \psi(\sigma(x))\right) \notin 2^{k+ \alpha+2} \mathbb{Z} \}.
\end{eqnarray*} 
Si $\overline{x} \in X/G$ v\'erifie $\varphi(\overline{x}) \left(\psi(x) + \psi(\sigma(x))\right) \notin 2^{k+ \alpha+2} \mathbb{Z}$, alors $\varphi(\overline{x}) \notin 2^{k + \alpha + 1} \mathbb{Z}$. R\'eciproquement, si $\overline{x} \in X/G$ v\'erifie $\varphi(\overline{x}) \notin 2^{k + \alpha + 1} \mathbb{Z}$, alors $x$ appartient au support de $\pi^*c$ (vrai au moins g\'en\'eriquement en dimension $k$) et donc soit au support de $\gamma$ soit au support de $\sigma~\gamma$, mais pas aux deux, et ainsi $\psi(x) + \psi(\sigma(x)) \notin 2^2 \mathbb{Z}$ et donc $\varphi(\overline{x}) \left(\psi(x) + \psi(\sigma(x))\right) \notin~2^{k+ \alpha+2} \mathbb{Z}$ (g\'en\'eriquement).

On obtient alors
$$\pi_* \gamma = \{ \overline{x} \in X/G~|~\varphi(\overline{x}) \notin 2^{k + \alpha + 1} \} = c$$
et $c$ appartient donc \`a l'image de $T^{\alpha+1}_k(X)$ par $\pi_*$.
\\

R\'eciproquement, soit $c \in T^{\alpha +1}_k(X)$. On a $c = \Supp~\psi~\modulo~2^{k + \alpha + 2}$ avec $\psi : X \rightarrow 2^{k+\alpha +1} \mathbb{Z}$ g\'en\'eriquement Nash-constructible en dimension $k$, et $(1 + \sigma) c = \Supp~\varphi~\modulo~2^{k + \alpha + 1}$ avec $\varphi$ g\'en\'eriquement Nash-constructible en dimension $k$. Comme $(1 + \sigma) c = \Supp~\frac{\psi + \psi \circ \sigma}{2}~\modulo~2^{k+ \alpha +1}$, on a alors 
$$\varphi \equiv \frac{\psi + \psi \circ \sigma}{2} ~\modulo~2^{k+ \alpha +1}$$
sur $X$.

On consid\`ere maintenant 
$$\pi_* c = \{\overline{x} \in X/G~|~\pi_*(\psi)(\overline{x}) \notin 2^{k + \alpha +2} \} = \{\overline{x} \in X/G~|~\psi(x) + \psi(\sigma(x)) \notin 2^{k + \alpha +2} \}.$$
On montre que l'on peut trouver une fonction g\'en\'eriquement Nash-constructible sur $X/G$, divisible par $2^{k+\alpha}$, qui repr\'esente $\pi_* c$. Pour cela, on applique le crit\`ere de l'\'eventail \cite{MCP} Theorem 4.10 \`a chacune des composantes irr\'eductibles de chaque carte affine de l'adh\'erence de Zariski $Y$ de $X/G$ et \`a la fonction 
$$f : y \mapsto \begin{cases}
\frac{\psi(x) + \psi(\sigma(x))}{2} \mbox{ si $y = \overline{x} \in X/G$}, \\
0 \mbox{ sinon, }
\end{cases}$$ 
d\'efinie et constructible sur $Y$.

Soit donc $F$ un \'eventail centr\'e en un point $y_0$ de $Y$, de cardinal $|F| \leq 2^{k + \alpha + 1}$. Comme $X$ est compact et l'action de $G$ sur $X$ \'etant libre, si le point $y_0 = \overline{x_0}$ est dans $X/G$, on peut relever $F$ en un \'eventail double $F' \sqcup \sigma(F')$ sur $X$ ($F'$ \'etant centr\'e en $x_0$ et $\sigma(F')$ en $\sigma(x_0)$), et sinon l'\'evaluation de $f$ en tout point de l'\'eventail sera nulle. Concentrons-nous donc sur le premier cas. On a alors 
$$\sum_{s \in F} f(s) = \sum_{s' \in F'} \frac{\psi\left(s'\right) + \psi\left(\sigma\left(s'\right)\right)}{2},$$
et cette somme, modulo $|F| = |F'| \leq 2^{k + \alpha +1}$, est \'egale \`a $\sum_{\sigma' \in F'} \varphi(\sigma')$ qui est elle-m\^eme \'egale \`a $0$ modulo $|F'| \leq 2^{k + \alpha +1}$, car $\varphi$ est g\'en\'eriquement Nash-constructible sur $X$ (\cite{MCP} Theorem 4.9 d'apr\`es \cite{Bon}).

Ainsi (\cite{MCP} Theorem 4.10), il existe une fonction $f_0 : Y \rightarrow \mathbb{Z}$, g\'en\'eriquement Nash-construc-tible, telle que, pour tout $y \in Y$, $f(y) - f_0(y) \equiv 0~ \modulo~ 2^{k + \alpha +1}$. En particulier, $f_0$ est divisible par $2^{k+\alpha}$ et 
$$\pi_* c = \Supp~f_0~\modulo~2^{k+\alpha+1}.$$
On peut de plus supposer $f_0$ g\'en\'eriquement constructible en dimension $k$, en rempla\c{c}ant, dans le raisonnement pr\'ec\'edent, $X$ par l'adh\'erence de Zariski du support de $c$. On a donc $\pi_* c \in \mathcal{N}_{\alpha} C_k(X/G)$.

\end{proof}

\begin{rem}

\begin{itemize}
	\item Les isomorphismes $\left(\mathcal{N}_{\alpha} C_k(X)\right)^G \cong \mathcal{N}_{\alpha} C_k\left(X/G\right)$ induisent des isomorphismes de complexes
$$\left(\mathcal{N}_{\alpha} C_*(X)\right)^G \cong \mathcal{N}_{\alpha} C_*\left(X/G\right),$$
induits par l'application quotient $\pi : X \rightarrow X/G$ par fonctorialit\'e de $\mathcal{N}_{\alpha} C_*$.
	\item Pour une $G$-vari\'et\'e alg\'ebrique r\'eelle $X$ quelconque munie d'une action de $G$ quelconque, en raisonnant de fa\c{c}on similaire, on obtient \'egalement les isomorphismes (de complexes)
	$$(1 + \sigma) \mathcal{N}_{\alpha} C_*(X) \cong im~\left( \mathcal{N}_{\alpha} C_*(X) \rightarrow C_*(X \setminus X^G / G) \right)$$ 
(o\`u $X \setminus X^G / G$ d\'esigne l'ensemble des points r\'eels de $X \setminus X^G$ quotient\'e par l'action restreinte de $G$).
\end{itemize}

\end{rem}

\end{document}